\title[Sobolev space]{A Marcinkiewicz integral type characterization of the Sobolev space}
\author{Piotr Haj{\l}asz and  Zhuomin Liu}
\address{P.\ Haj{\l}asz: Department of Mathematics, University of Pittsburgh, 301
  Thackeray Hall, Pittsburgh, PA 15260, USA, {\tt hajlasz@pitt.edu}}
\address{Z. Liu: Department of Mathematics and Statistics,
University of Jyv\"askyl\"a,
P.O. Box 35 (MaD), FI-40014 Jyv\"askyl\"a, Finland,
{\tt liuzhuomin@hotmail.com}}
\thanks{P.H.\ was supported by NSF grant DMS-1161425. Z.L.\ was supported by the ERC CZ grant LL1203 of the Czech Ministry of Education.}
\def\eps{\varepsilon}
\def\vi{\varphi}
\def\M{{\mathcal M}}
\def\H{{\mathcal H}}
\newtheorem{theorem}{Theorem}
\newtheorem{lemma}[theorem]{Lemma}
\newtheorem{proposition}[theorem]{Proposition}
\theoremstyle{definition}
\newcommand{\barint}{
\rule[.036in]{.12in}{.009in}\kern-.16in \displaystyle\int }
\newcommand{\barcal}{\mbox{$ \rule[.036in]{.11in}{.007in}\kern-.128in\int $}}
\newcommand{\bbbr}{\mathbb R}
\def\mvint_#1{\mathchoice
          {\mathop{\vrule width 6pt height 3 pt depth -2.5pt
                  \kern -8pt \intop}\nolimits_{\kern -3pt #1}}%
%%%% P.S., 01/03/2001
% old definition had ...\nolimits_{#1}}
% \kern -3pt makes nicer distances between the integral sign
% and the domain of integration
%%%%
          {\mathop{\vrule width 5pt height 3 pt depth -2.6pt
                  \kern -6pt \intop}\nolimits_{#1}}%
          {\mathop{\vrule width 5pt height 3 pt depth -2.6pt
                  \kern -6pt \intop}\nolimits_{#1}}%
          {\mathop{\vrule width 5pt height 3 pt depth -2.6pt
                  \kern -6pt \intop}\nolimits_{#1}}}
\numberwithin{theorem}{section} \numberwithin{equation}{section}
\begin{document}

\subjclass[2010]{Primary 46E35; Secondary 42B25}
\keywords{Sobolev spaces, Littlewood-Paley theory}
\sloppy

%\dsp

\begin{abstract}
In this paper we present a new characterization of the Sobolev space $W^{1,p}$, $1<p<\infty$
which is a higher dimensional version of a result of Waterman \cite{waterman}.
We also provide a new and simplified proof of a recent result
of Alabern, Mateu and Verdera \cite{AMV}. Finally, we generalize the results to the
case of weighted Sobolev spaces with respect to a Muckenhoupt weight.
\end{abstract}

\maketitle

\section{Introduction}
\label{introduction}

In connection with differentiability properties of periodic functions
Marcinkiewicz \cite{marcinkiewicz} introduced the following integral
$$
\nu(f)=\Big(\int_0^{2\pi}|F(x+t)+F(x-t)-2F(x)|^2\frac{dt}{t^3}\Big)^{1/2}\, ,
\quad
\mbox{where} 
\quad
F(x)=\int_0^x f(t)\, dt.
$$
For more details regarding
Marcinkiewicz's results see Vol. II, Chapter XIV, Theorems~5.1 and~5.3 in \cite{zygmund2}.
Marcinkiewicz conjectured that for $1<p<\infty$ there is a constant $C_p>0$ such that
$$
\Vert \nu(f)\Vert_p\leq C_p\Vert f\Vert_p
\quad
\mbox{for $f\in L^p(S^1)$}
$$
and
$$
\Vert f\Vert_p \leq C_p\Vert\nu(f)\Vert_p
\quad
\mbox{for $f\in L^p(S^1)$ such that
$\int_0^{2\pi}f(t)\, dt = 0$.}
$$
The condition in the second inequality that the integral vanishes is necessary, because
for constant functions the right hand side of the inequality equals zero.
The conjecture of Marcinkiewicz was answered in the affirmative by Zygmund
\cite{zygmund}. Later Waterman \cite{waterman} extended the method of Zygmund to the non-periodic case and he proved
\begin{theorem}
\label{T1}
For $1<p<\infty$, there is a constant $C_p\geq 1$ such that
$$
C_p^{-1} \Vert f\Vert_p\leq \Vert\mu(f)\Vert_p \leq C_p\Vert f\Vert_p,
\quad
\mbox{for all $f\in L^p(\bbbr)$,}
$$
where
\begin{equation}
\label{e1}
\mu(f)(x) =\Big(\int_0^\infty |F(x+t)+F(x-t)-2F(x)|^2\frac{dt}{t^3}\Big)^{1/2},
\qquad
F(x)=\int_0^x f(t)\, dt.
\end{equation}
\end{theorem}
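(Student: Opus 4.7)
The plan is to identify $\mu(f)$ with a classical Littlewood--Paley $g$-function and then apply the standard $L^p$ theory for such square functions.

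\emph{Reduction to a $g$-function.} The fundamental theorem of calculus gives
\[
F(x+t)+F(x-t)-2F(x) = \int_{\bbbr} f(y)\bigl(\chi_{[x,x+t]}(y)-\chi_{[x-t,x]}(y)\bigr)\,dy = t\,(f*\psi_t)(x),
\]
where $\psi(u)=\chi_{[-1,0]}(u)-\chi_{[0,1]}(u)$ and $\psi_t(u)=t^{-1}\psi(u/t)$ is its $L^1$-normalized dilation. Substituting into \eqref{e1} yields
\[
\mu(f)(x)^2 = \int_0^\infty |(f*\psi_t)(x)|^2\,\frac{dt}{t},
\]
so $\mu(f)$ is precisely the $g$-function associated to $\psi$. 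The kernel $\psi$ is odd (hence $\int\psi=0$), bounded and compactly supported, and although it has three jumps, it obeys the integrated Lipschitz estimate $\int_{\bbbr}|\psi(x+h)-\psi(x)|\,dx\leq 4|h|$ for small $h$.

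\emph{Upper bound.} A direct computation gives $\hat\psi(\xi)=4i\sin^2(\xi/2)/\xi$, so $C_0:=\int_0^\infty|\hat\psi(s)|^2\,ds/s\in(0,\infty)$, and Plancherel together with the substitution $s=t\xi$ yields $\|\mu(f)\|_2=C_0^{1/2}\|f\|_2$. For $1<p<\infty$ I would view the operator $Tf=(f*\psi_t)_{t>0}$ as a vector-valued singular integral
\[
T\colon L^p(\bbbr)\longrightarrow L^p\bigl(\bbbr;\,L^2((0,\infty),dt/t)\bigr),
\]
already $L^2$-bounded by the above. The vector-valued H\"ormander condition on the kernel $(x,y)\mapsto(\psi_t(x-y))_{t>0}$ is verified by splitting the $t$-integral into the ranges $t\leq|h|$ (where one uses the trivial size bound on $\psi_t$) and $t>|h|$ (where one uses the integrated Lipschitz bound above), giving a uniform control. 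Vector-valued Calder\'on--Zygmund theory then produces $\|\mu(f)\|_p\leq C_p\|f\|_p$.

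\emph{Lower bound by duality.} Since $\psi$ is real, Plancherel produces the polarization identity
\[
\int_{\bbbr}\int_0^\infty (f*\psi_t)(x)\,(g*\psi_t)(x)\,\frac{dt}{t}\,dx = C_0\int_{\bbbr} f(x)\,g(x)\,dx
\]
for real Schwartz $f,g$. Applying pointwise Cauchy--Schwarz in $L^2(dt/t)$, then H\"older in $x$, and finally the upper bound at the dual exponent~$p'$,
\[
\Bigl|\int fg\,dx\Bigr|\leq C_0^{-1}\int \mu(f)\mu(g)\,dx \leq C_0^{-1}\|\mu(f)\|_p\|\mu(g)\|_{p'}\leq C_0^{-1}C_{p'}\|\mu(f)\|_p\|g\|_{p'}.
\]
Taking the supremum over $g$ with $\|g\|_{p'}=1$ and using density yields $\|f\|_p\lesssim\|\mu(f)\|_p$. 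The principal obstacle is verifying the vector-valued H\"ormander condition, because pointwise smoothness of $\psi$ is unavailable; the integrated Lipschitz estimate combined with the scale case-split in $t$ is exactly what is required. Everything else amounts to elementary Fourier computation or the standard duality template.
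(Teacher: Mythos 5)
Your proposal is correct and follows essentially the route the paper indicates for this result: the paper quotes Theorem \ref{T1} from Waterman and observes that it follows from the $L^p$ bound for the Marcinkiewicz integral (Theorem \ref{T1.1}, here with $\Omega(y')=\operatorname{sign} y$ on $S^0$, i.e.\ your vector-valued Calder\'on--Zygmund step) combined with the $L^2$ polarization-plus-duality argument of Step~4 in Section~\ref{3}. Your identification $\mu(f)^2=\int_0^\infty|f*\psi_t|^2\,dt/t$, the Plancherel isometry, the H\"ormander verification via the integrated Lipschitz bound for $\psi$, and the duality step are all sound and match the paper's scheme, the only difference being that you prove the vector-valued singular integral bound directly rather than citing Benedek--Calder\'on--Panzone.
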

Stein \cite{stein} generalized the Marcinkiewicz integral \eqref{e1} to higher dimensions as follows.
Let $\Omega\in L^1(S^{n-1})$ have vanishing integral
\begin{equation}
\label{e1.1}
\int_{S^{n-1}} \Omega(y)\, d\sigma(y)=0.
\end{equation}
The Marcinkiewicz integral of Stein is defined by
\begin{equation}
\label{e1.2}
\mu_\Omega(f)(x)=\Big(\int_0^\infty \Big|\int_{|y|\leq t} \frac{\Omega(y')}{|y|^{n-1}} f(x-y)\, dy\Big|^2\frac{dt}{t^3}\Big)^{1/2},
\quad
\mbox{where $y'=y/|y|$.}
\end{equation}
If $n=1$ and $\Omega(y')={\rm sign}\, y$, we obtain integral \eqref{e1}.
Stein proved in all dimensions that if $\Omega$ is odd, then $\mu_\Omega$ is bounded in $L^p$, $1<p<\infty$, and if 
$\Omega$ is H\"older continuous with exponent $0<\alpha\leq 1$, then $\mu_\Omega$ is bounded in 
$L^p$, $1<p\leq 2$, and is of weak type $(1,1)$. In the odd case the result was obtained as a consequence of the
one dimensional result due to Waterman. The methods used by Stein were quite difficult. 
Later Benedek, Calder\'on and Panzone \cite{BCP} proved the following result
by way of vector valued singular integrals.
\begin{theorem}
\label{T1.1}
If $\Omega\in C^1(S^{n-1})$ satisfies \eqref{e1.1}, then for $1<p<\infty$ there is a constant 
$C=C(n,p)\geq 1$ such that
\begin{equation}
\label{e1.3}
\Vert \mu_\Omega (f)\Vert_p \leq C \Vert f\Vert_p
\quad
\mbox{for $f\in L^p(\bbbr^n)$.}
\end{equation}
\end{theorem}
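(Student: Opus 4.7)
The plan is to realize $\mu_\Omega$ as a vector-valued singular integral operator and invoke vector-valued Calder\'on-Zygmund theory. Set $H=L^2((0,\infty),dt/t^3)$ and define the $H$-valued kernel
$$\mathbf{K}(y)=\{K_t(y)\}_{t>0},\qquad K_t(y)=\frac{\Omega(y/|y|)}{|y|^{n-1}}\chi_{B(0,t)}(y),$$
so that $\mu_\Omega(f)(x)=\|(\mathbf{K}*f)(x)\|_H$, where at each level $t$ the convolution is the ordinary scalar one. By the vector-valued Calder\'on-Zygmund theorem, in order to establish \eqref{e1.3} for all $1<p<\infty$ it suffices to prove (a) boundedness of $f\mapsto \mathbf{K}*f$ from $L^2(\bbbr^n)$ to $L^2(\bbbr^n;H)$, and (b) the H\"ormander-type condition
$$\int_{|x|>2|y|}\|\mathbf{K}(x-y)-\mathbf{K}(x)\|_H\,dx\leq C.$$

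For (a), Plancherel and Fubini reduce the question to showing
$$\sup_{\xi\in\bbbr^n}\int_0^\infty |\widehat K_t(\xi)|^2\,\frac{dt}{t^3}<\infty.$$
Integration in polar coordinates gives $\widehat K_t(\xi)=\int_0^t m(r\xi)\,dr$ with $m(\eta)=\int_{S^{n-1}}\Omega(\omega)e^{-2\pi i\omega\cdot\eta}\,d\sigma(\omega)$. The cancellation hypothesis \eqref{e1.1} forces $m(0)=0$, so that $|m(\eta)|\lesssim |\eta|$ near the origin, while a standard stationary-phase argument using the $C^1$ regularity of $\Omega$ gives decay $|m(\eta)|\lesssim |\eta|^{-\alpha}$ at infinity for some $\alpha>0$. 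The rescaling $u=t|\xi|$ then reduces the integral to a $\xi$-independent, absolutely convergent expression in $u$.

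For (b), assume $|y|<|x|/2$ and split
$$\|\mathbf{K}(x-y)-\mathbf{K}(x)\|_H^2=\int_0^\infty|K_t(x-y)-K_t(x)|^2\,\frac{dt}{t^3}=I+II,$$
where $I$ integrates over the range $\min(|x|,|x-y|)<t<\max(|x|,|x-y|)$ (exactly one of the two points lies in $B(0,t)$) and $II$ over $t>\max(|x|,|x-y|)$ (both lie inside). In $I$, one has $|K_t(x-y)-K_t(x)|\lesssim |x|^{-(n-1)}$ on a $t$-interval of length $\le|y|$ centered near $t\asymp|x|$, yielding $I\lesssim |y||x|^{-(2n+1)}$. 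In $II$, the indicator functions cancel, and the $C^1$ regularity of $\Omega(\cdot/|\cdot|)|\cdot|^{1-n}$ on the region $\{|z|\ge |x|/2\}$ gives $|K_t(x-y)-K_t(x)|\lesssim |y||x|^{-n}$, so after integrating $dt/t^3$ one obtains $II\lesssim |y|^2|x|^{-(2n+2)}$. Taking square roots and integrating over $|x|>2|y|$, both contributions produce a bound uniform in $y$, since the resulting radial integrals are scale-invariant.

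The main obstacle is this geometric bookkeeping in step (b), where one must carefully track the two distinct regimes for the $t$-variable and identify the precise size of $K_t(x-y)-K_t(x)$ in each. Once (a) and (b) are in place, vector-valued Calder\'on-Zygmund theory upgrades the $L^2$-boundedness to weak-type $(1,1)$ for $\mathbf{K}*$ as a map into $L^{1,\infty}(\bbbr^n;H)$; Marcinkiewicz interpolation and duality then deliver \eqref{e1.3} for every $1<p<\infty$.
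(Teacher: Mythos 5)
The paper does not actually prove Theorem~\ref{T1.1}: it is quoted from Benedek, Calder\'on and Panzone \cite{BCP}, whose proof is precisely the vector-valued singular-integral argument you give (an $L^2(dt/t^3)$-valued kernel, $L^2$ boundedness via Plancherel and the cancellation of $\Omega$, plus the H\"ormander condition split into the ``one point inside, one outside'' and ``both inside'' regimes), so your proposal matches the intended approach and is correct. The one soft spot is the claim $|m(\eta)|\lesssim|\eta|^{-\alpha}$ ``by stationary phase'': this fails for $n=1$, where $S^{0}$ carries no curvature and $m(\eta)=-2i\,\Omega(1)\sin(2\pi\eta)$ does not decay, but the quantity that is actually needed, $\sup_{\xi}\int_0^\infty|\widehat{K_t}(\xi)|^2\,dt/t^3$, is still finite there by direct computation (using $|\widehat{K_t}(\xi)|\lesssim\min(t^2|\xi|,|\xi|^{-1})$), so the conclusion is unaffected.
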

An optimal condition under which \eqref{e1.3} is satisfied was discovered in \cite{Pan}:
\eqref{e1.3} holds true provided
$\Omega$ satisfies \eqref{e1.1} and $\Omega\in L(\log L)^{1/2}(S^{n-1})$.
A generalization of Theorem~\ref{T1.1} to the case of weighted $L^p_w(\bbbr^n)$ spaces,
where $w\in A_p$ is a Muckenhoupt weight, was obtained by Sato in \cite{sato}, see Theorem~\ref{T11} below.
For recent sharp results, see \cite{duo}.
There has been a tremendous development of the theory of Marcinkiewicz integrals 
(literally hundreds of papers) and 
it is simply not possible to provide relevant references here;  nonetheless the reader will have no problems 
with finding them. 

It turns out that under certain additional assumptions about $\Omega$ we have
\begin{equation}
C^{-1}\Vert f\Vert_p\leq\Vert \mu_\Omega (f)\Vert_p \leq C \Vert f\Vert_p
\quad
\mbox{for $f\in L^p(\bbbr^n)$.}
\end{equation}
The left inequality is obtained from the right one by a duality argument
(see Step~4 in Section~\ref{3} for details).
These assumptions are satisfied for example by $\Omega(y')={\rm sign}\, y$ when $n=1$
and hence Theorem~\ref{T1} follows.

Recall that the Sobolev space $W^{1,p}(\bbbr^n)$ is the space of functions $f\in L^p(\bbbr^n)$
with first order weak derivatives in $L^p(\bbbr^n)$. $W^{1,p}(\bbbr^n)$ is a Banach space
with the norm $\Vert f\Vert_{1,p}=\Vert f\Vert_p+\Vert\nabla f\Vert_p$.
Observe that Theorem~\ref{T1} can be regarded as a characterization of the Sobolev space $W^{1,p}(\bbbr)$.
Indeed, if $f\in W^{1,p}(\bbbr)$, $1<p<\infty$, and
\begin{equation}
\label{e24}
T(f)=\Big(\int_0^\infty |f(x+t)+f(x-t)-2f(x)|^2\frac{dt}{t^3}\Big)^{1/2},
\end{equation}
then $T(f)=\mu(f')$ and hence
\begin{equation}
\label{e1.4}
C_p^{-1}\Vert f'\Vert_p\leq \Vert T(f)\Vert_p\leq C_p\Vert f'\Vert_p.
\end{equation}
It follows from this inequality that $f\in W^{1,p}(\bbbr)$ if and only if $f\in L^p(\bbbr)$ and $T(f)\in L^p(\bbbr)$.
Stein~\cite{stein2}, \cite[p.\ 163]{stein3} generalized this characterization to higher dimensions
as follows 
\begin{theorem}
\label{T2}
$f\in W^{1,p}(\bbbr^n)$, $\frac{2n}{n+1}<p<\infty$ if and only if $f\in L^p(\bbbr^n)$ and
\begin{equation}
\label{e22}
\Big(\int_{\bbbr^n}\frac{|f(x+y)+f(x-y)-2f(x)|^2}{|y|^{n+2}}\, dy\Big)^{1/2}\in L^p(\bbbr^n).
\end{equation}
\end{theorem}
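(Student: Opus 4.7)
Passing to polar coordinates $y=t\theta$ with $t>0$, $\theta\in S^{n-1}$, the identity $|y|^{-(n+2)}\,dy = t^{-3}\,dt\,d\sigma(\theta)$ combined with Fubini gives the key reduction
\[
S(f)(x)^2 := \int_{\bbbr^n}\frac{|f(x+y)+f(x-y)-2f(x)|^2}{|y|^{n+2}}\,dy
= \int_{S^{n-1}} T_\theta f(x)^2\,d\sigma(\theta),
\]
where $T_\theta f(x) := \big(\int_0^\infty |f(x+t\theta)+f(x-t\theta)-2f(x)|^2\,t^{-3}\,dt\big)^{1/2}$ is the one-dimensional operator \eqref{e24} carried out along the line through $x$ in direction $\theta$. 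The $n$-dimensional characterization is thereby reduced to a spherical average of one-dimensional problems, each governed by Theorem~\ref{T1}.

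For the forward implication $f\in W^{1,p}\Rightarrow S(f)\in L^p$, fix $\theta$ and slice along parallel lines in direction $\theta$. For each $z$ in the hyperplane $\theta^\perp$ the slice $h_{z,\theta}(s) := f(z+s\theta)$ satisfies $T(h_{z,\theta})(s) = T_\theta f(z+s\theta)$, so \eqref{e1.4} applied to $h_{z,\theta}$ together with Fubini in $z$ yields
\[
\|T_\theta f\|_p \le C_p\|\partial_\theta f\|_p \le C_p\|\nabla f\|_p,\qquad 1<p<\infty.
\]
For $p\ge 2$, Minkowski's integral inequality in the form $\|(\int_{S^{n-1}} T_\theta^2\,d\sigma)^{1/2}\|_p \le (\int_{S^{n-1}}\|T_\theta f\|_p^2\,d\sigma)^{1/2}$ then produces the desired $\|S(f)\|_p \le C_p\|\nabla f\|_p$.

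The range $\tfrac{2n}{n+1}<p<2$ is the real obstacle, because Minkowski's inequality points the wrong way: one cannot move the $L^p(dx)$ norm inside the $L^2(d\sigma(\theta))$ norm. My plan here is to treat $\theta\mapsto T_\theta f$ as an $L^2(S^{n-1})$-valued square function and to invoke a vector-valued extension of Theorem~\ref{T1.1}: decompose the angular kernel in odd spherical harmonics so that each component reduces to a $\mu_\Omega$-type estimate, and combine via the vector-valued Benedek--Calder\'on--Panzone theory. For the reverse inequality $\|\nabla f\|_p \le C_p(\|f\|_p+\|S(f)\|_p)$, I would follow the duality route alluded to after \eqref{e1.3} in the introduction: pair $f$ against $\Delta\varphi$ for smooth test functions $\varphi$, integrate by parts, and apply the forward bound with conjugate exponent $p'$. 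The principal difficulty throughout is the vector-valued step in the small-$p$ regime; the threshold $p=\tfrac{2n}{n+1}$ is expected to reflect the intrinsic limitations of the spherical-average machinery rather than being an artifact of the method.
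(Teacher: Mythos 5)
The paper does not actually prove Theorem~\ref{T2}: it is quoted from Stein \cite{stein2}, \cite[p.~163]{stein3} as background motivation, so there is no internal proof to compare against and I am judging your argument on its own merits. Your polar-coordinate identity $S(f)(x)^2=\int_{S^{n-1}}T_\theta f(x)^2\,d\sigma(\theta)$ is correct, and the forward bound for $p\ge 2$ (slicing along lines in direction $\theta$, applying \eqref{e1.4} to each slice, Fubini in the transverse variable, then Minkowski's integral inequality with exponent $p/2\ge 1$) is a complete and valid argument on that sub-range.

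Everything that makes the theorem nontrivial, however, is missing. First, for $\frac{2n}{n+1}<p<2$ you offer only a plan (spherical harmonics plus a vector-valued Benedek--Calder\'on--Panzone theorem) with no execution, and the plan does not explain where the threshold $\frac{2n}{n+1}$ would come from: a vector-valued extension of Theorem~\ref{T1.1} would be expected either to work for all $1<p<\infty$ or to fail for reasons you have not identified. In Stein's treatment the restriction enters through the Littlewood--Paley function $g_\lambda^*$ with $\lambda=(n+1)/n$, whose $L^p$-boundedness fails precisely for $p\le 2/\lambda=\frac{2n}{n+1}$; nothing in your sketch plays that role, so the appearance of the correct threshold in your statement is unexplained. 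Second, the reverse inequality is also only a plan, and as described it cannot close: the standard polarization/duality argument (compare Steps~3--4 of Section~\ref{3}) controls the $L^p$ norm of $\nabla f$ using the \emph{forward} bound at the conjugate exponent $q$, but for $p\ge\frac{2n}{n-1}$ one has $q\le\frac{2n}{n+1}$, outside the range where you have (or could have) the forward bound, so duality alone cannot cover the full range $\frac{2n}{n+1}<p<\infty$. Moreover, ``pair $f$ against $\Delta\varphi$ and integrate by parts'' is not yet a polarization identity for a square function; you would first need an $L^2$ isometry or multiplier computation for the second difference, analogous to Lemma~\ref{T14}, which you have not supplied. Finally, deducing $f\in W^{1,p}$ from $f\in L^p$ and \eqref{e22} requires a mollification and weak-compactness step as in Step~6 of Section~\ref{3}; this is routine but absent. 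As it stands, the proposal establishes only that $f\in W^{1,p}$ implies \eqref{e22} when $p\ge 2$.
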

Note that when $n=1$, the integral in \eqref{e22} equals $\sqrt{2}T(f)$
and hence Theorem~\ref{T2} is a natural generalization of the characterization of $W^{1,p}(\bbbr)$ mentioned
above to higher dimensions, but a problem is that in higher dimensions Theorem~\ref{T2} does not cover the
case $1<p\leq 2n/(n+1)$.
Actually Stein proved a more general result that includes a characterization of Bessel potential spaces. 
For other related characterizations of Sobolev and Bessel potential spaces, see for example
\cite{AMV,dorronsoro,nicolau,strichartz,wheeden1,wheeden2}. We will discus the paper \cite{AMV} later on.

One of the aims of this paper is to generalize the characterization \eqref{e1.4} to higher dimensions
in a way that it would be valid for all $1<p<\infty$.
To avoid the limitation on the exponent $p$ in Stein's Theorem~\ref{T2}, we generalize the Marcinkiewicz integral
in a different way. Observe that in dimension one
$$
\frac{f(x+t)+f(x-t)-2f(x)}{2}= \barint_{S(x,t)} f(y)\, d\sigma(y) - f(x)=f_{S(x,t)}-f(x),
$$
where the barred integral denotes the integral average and in our case we take 
the average over the zero dimensional sphere $S(x,t)$. Here and in what follows $f_E$ is used to denote the integral average of $f$ over $E$.
Now for $f\in L^p(\bbbr^n)$ we define
\begin{equation}
\label{e2}
Tf(x)=\Big(\int_0^\infty \Big|f(x)-f_{S(x,t)}\Big|^2\frac{dt}{t^3}\Big)^{1/2}.
\end{equation}
Note that when $n=1$, the definition \eqref{e2} is consistent with \eqref{e24} (up to a constant factor).
One of the main results of this paper reads as follows.
\begin{theorem}
\label{T3}
Suppose that $f\in L^p(\bbbr^n)$, $1<p<\infty$. Then $f\in W^{1,p}(\bbbr^n)$ if and only if
$Tf\in L^p(\bbbr^n)$. Moreover there is a constant $C=C(n,p)\geq 1$ such that
\begin{equation}
\label{e19}
C^{-1}\Vert \nabla f\Vert_p\leq \Vert Tf\Vert_p\leq C\Vert\nabla f\Vert_p.
\end{equation}
\end{theorem}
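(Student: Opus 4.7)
My plan is to establish the two-sided bound $\|Tf\|_p \simeq \|\nabla f\|_p$ first on a dense class of smooth $f$, and then extend it to all of $L^p$ by mollification. The upper bound reduces to Theorem~\ref{T1.1} via an identity expressing $f - f_{S(x,t)}$ as a sum of Stein-Marcinkiewicz integrals of the partial derivatives; the lower bound is obtained from the $L^2$ identity (Plancherel) by polarization together with a duality and Helmholtz-decomposition argument.

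\emph{Upper bound.} For smooth $f$, the fundamental theorem of calculus along radii and a change to polar coordinates yield
\begin{equation*}
f(x)-f_{S(x,t)} \;=\; \sum_{j=1}^n \int_{|y|\le t}\frac{\Omega_j(y')}{|y|^{n-1}}\,\partial_j f(x-y)\,dy,
\qquad \Omega_j(y')=y_j',\ y'=y/|y|.
\end{equation*}
Each $\Omega_j$ is smooth on $S^{n-1}$ and odd, hence has mean zero, and the expression inside the parentheses is precisely the one defining $\mu_{\Omega_j}(\partial_j f)$. Using $|\sum_j a_j|^2 \le n\sum_j a_j^2$ and Theorem~\ref{T1.1},
\begin{equation*}
\|Tf\|_p \;\le\; \sqrt{n}\sum_{j=1}^n \|\mu_{\Omega_j}(\partial_j f)\|_p \;\le\; C\,\|\nabla f\|_p.
\end{equation*}

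\emph{$L^2$ identity and lower bound.} Writing $f-f_{S(\cdot,t)} = f*(\delta_0-\sigma_t)$ with $\sigma_t$ the normalized surface measure on $S(0,t)$, Plancherel, Fubini and the substitution $s=t|\xi|$ give
\begin{equation*}
\|Tf\|_2^2 \;=\; m \int_{\bbbr^n}|\xi|^2|\hat f(\xi)|^2\,d\xi \;=\; m\,\|\nabla f\|_2^2,
\qquad m=\int_0^\infty|1-\hat\sigma(se_1)|^2\,\frac{ds}{s^3},
\end{equation*}
and $0<m<\infty$ because $\hat\sigma(\xi)=1-c|\xi|^2+O(|\xi|^4)$ near $0$ (so the integrand is $O(s)$) and $\hat\sigma$ decays at infinity (so the integrand is $O(s^{-3})$). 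Polarization gives, for $f,g\in\mathcal S$,
\begin{equation*}
m\int_{\bbbr^n}\nabla f\cdot\nabla g\,dx \;=\; \int_{\bbbr^n}\int_0^\infty(f-f_{S(\cdot,t)})(g-g_{S(\cdot,t)})\,\frac{dt}{t^3}\,dx,
\end{equation*}
and Cauchy-Schwarz in $(x,t)$ followed by H\"older in $x$ and the upper bound applied to $g$ yield
\begin{equation*}
m\Big|\int\nabla f\cdot\nabla g\,dx\Big| \;\le\; \|Tf\|_p\,\|Tg\|_{p'} \;\le\; C\,\|Tf\|_p\,\|\nabla g\|_{p'}.
\end{equation*}
To promote this to a bound on $\|\nabla f\|_p$, I test against an arbitrary vector field $h\in\mathcal S(\bbbr^n;\bbbr^n)$ and use the Helmholtz decomposition $h=\nabla g+h^\perp$ with $\divv h^\perp=0$, taking $g=-(-\Delta)^{-1}\divv h$. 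The $L^{p'}$-boundedness of the Riesz transforms gives $\|\nabla g\|_{p'}\le C\|h\|_{p'}$, and integration by parts gives $\int\nabla f\cdot h^\perp\,dx=0$. Hence $|\int\nabla f\cdot h\,dx|\le C\|Tf\|_p\|h\|_{p'}$, and the supremum over $h$ yields $\|\nabla f\|_p\le C\|Tf\|_p$.

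\emph{Extension.} For $f\in L^p$ with $Tf\in L^p$, the mollifications $f_\epsilon=f*\phi_\epsilon$ are smooth; since sphere averages commute with convolution, Minkowski's integral inequality gives the pointwise bound $Tf_\epsilon(x)\le(Tf*\phi_\epsilon)(x)$, so $\|Tf_\epsilon\|_p\le\|Tf\|_p$. The lower bound applied to $f_\epsilon$ then gives $\|\nabla f_\epsilon\|_p\le C\|Tf\|_p$ uniformly in $\epsilon$, and weak compactness together with $f_\epsilon\to f$ in $L^p$ identify the weak limit as $\nabla f\in L^p$. The main obstacle I expect is the rigorous justification of the polarized $L^2$ identity for general $p\ne 2$ test pairs (absolute convergence of the double integral requires the upper bound applied to $Tg$ already, and then Fubini) and the Helmholtz step, which relies on the $L^{p'}$-theory of Riesz transforms.
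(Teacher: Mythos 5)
Your proposal is correct in substance and follows the same architecture as the paper: the representation of $f-f_{S(x,t)}$ as a Marcinkiewicz integral of $\nabla f$ (the paper's Lemma~\ref{T4}/\ref{T5} -- note your identity is missing the normalizing factor $\tfrac{1}{n\omega_n}$, which is harmless) plus Theorem~\ref{T1.1} for the upper bound; an $L^2$ Plancherel identity plus duality for the lower bound; and mollification with $Tf_\eps\leq Tf*\vi_\eps$ and weak compactness for the converse implication. The one place where you genuinely reorganize the argument is the duality step. The paper introduces the scalar square function $\tilde{T}g=\big(\int_0^\infty|\phi_t*Rg|^2\,dt/t\big)^{1/2}$, shows it is an $L^2$ isometry and $L^p$-bounded, deduces $\Vert g\Vert_p\lesssim\Vert\tilde{T}g\Vert_p$ by testing against scalar $h\in L^q\cap L^2$, and only at the end substitutes $g=(-\Delta)^{1/2}f$ so that $Tf=\tilde{T}g$ and $\Vert g\Vert_p\approx\Vert\nabla f\Vert_p$. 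You instead polarize the identity for $T$ itself and absorb the vectorial nature of $\nabla f$ via the Helmholtz projection $h\mapsto\nabla(-\Delta)^{-1}\divv h$; since that projection is $-R_jR_k$ in disguise, the two routes use the same harmonic-analysis input, but yours trades the clean scalar reduction for a mild technical cost: the function $g=-(-\Delta)^{-1}\divv h$ is not Schwartz (and need not lie in $L^{p'}$), so the polarized identity and the upper bound $\Vert Tg\Vert_{p'}\lesssim\Vert\nabla g\Vert_{p'}$ must be justified for such $g$, e.g.\ by approximating $\nabla g$ in $L^{p'}$ by gradients of Schwartz functions and noting that both $Tg$ and the bilinear form depend on $g$ only through $\nabla g$. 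This is routine but should be said; the paper's formulation with $\tilde{T}$ avoids it entirely because the dual test functions there are arbitrary scalars in $L^q\cap L^2$. You correctly flag this, and the remaining points (positivity and finiteness of $m$ from the asymptotics of $\hat\sigma$, the vanishing of $\int\nabla f\cdot h^\perp$) are fine.
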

When $n=1$, Theorem~\ref{T3} is the same as the characterization \eqref{e1.4}.
We will also prove that $Tf$ can be expressed as a Marcinkiewicz  integral \eqref{e1.2} of $\nabla f$.
Let
$$
\phi(x)=\frac{1}{n\omega_n}\, \frac{x}{|x|^n}\, \chi_{B(0,1)}(x)
\quad
\mbox{and}
\quad
\phi_t(x)=t^{-n}\phi(x/t)=\frac{1}{tn\omega_n}\, \frac{x}{|x|^n}\, \chi_{B(0,t)}(x).
$$
Here and in what follows $\omega_n$ is the volume of a unit ball in $\bbbr^n$ and hence 
$n\omega_n$ is the surface area of the sphere $S^{n-1}(0,1)$.
\begin{lemma}
\label{T4}
If $f\in W^{1,1}_{\rm loc}(\bbbr^n)$, then
\begin{equation}
\label{e11}
Tf(x)=\Big(\int_0^\infty \big|\phi_t*\nabla f(x)\big|^2\frac{dt}{t}\Big)^{1/2}=
\frac{1}{n\omega_n}\Big(\int_0^\infty
\Big|\int_{|y|\leq t} \frac{y}{|y|^n}\cdot\nabla f(x-y)\, dy\Big|^2\frac{dt}{t^3}\Big)^{1/2}
\end{equation}
for almost all $x\in\bbbr^n$.
\end{lemma}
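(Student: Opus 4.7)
The strategy is to reduce \eqref{e11} to the single pointwise identity
$$
f(x) - f_{S(x,t)} = \frac{1}{n\omega_n}\int_{|y|\leq t}\frac{y}{|y|^n}\cdot\nabla f(x-y)\, dy = t\,\phi_t*\nabla f(x);
$$
once this is in hand, squaring, dividing by $t^3$ and integrating in $t\in(0,\infty)$ immediately yields \eqref{e11}. The second equality is a tautology once one unpacks the definition of $\phi_t$: a direct computation gives $t\phi_t(y) = \frac{1}{n\omega_n}\frac{y}{|y|^n}\chi_{B(0,t)}(y)$, so $t\phi_t*\nabla f(x)$ coincides with the middle integral.

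I would establish the first equality initially for $f\in C^\infty$. Parametrizing $S(x,t)$ by $y=x+t\omega$ with $\omega\in S^{n-1}$ and applying the fundamental theorem of calculus along each ray $s\mapsto x+s\omega$ gives
$$
f_{S(x,t)} - f(x) = \frac{1}{n\omega_n}\int_{S^{n-1}}\int_0^t \omega\cdot\nabla f(x+s\omega)\, ds\, d\sigma(\omega).
$$
Converting the $(s,\omega)$ integration to polar coordinates $y=s\omega\in B(0,t)$, so that $ds\,d\sigma(\omega)=|y|^{-(n-1)}dy$, and then substituting $y\to -y$, produces the sought identity with the correct sign.

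To extend from $C^\infty$ to $W^{1,1}_{\rm loc}$ I would mollify. The functions $f_\eps = f*\eta_\eps$ satisfy the identity pointwise, and both of the operations $g\mapsto g_{S(\cdot,t)}$ and $g\mapsto \phi_t*\nabla g$ commute with convolution against $\eta_\eps$ by Fubini. Consequently the identity for $f_\eps$ rewrites as $(f - f_{S(\cdot,t)})*\eta_\eps = (t\phi_t*\nabla f)*\eta_\eps$, and letting $\eps\to 0$ yields the identity for almost every pair $(x,t)$, which is enough to conclude \eqref{e11}.

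The main subtlety I expect is verifying that $x\mapsto f_{S(x,t)}$ is a well-defined element of $L^1_{\rm loc}(\bbbr^n)$ for almost every $t>0$, so that the mollification step is legitimate. This should follow from a routine Fubini argument combined with the fact that $t\phi_t*\nabla f\in L^1_{\rm loc}$ (its kernel $|y|^{-(n-1)}$ has an integrable singularity at the origin), thereby forcing the same property for the left-hand side $f - f_{S(\cdot,t)}$; alternatively one can work directly with an ACL representative of $f$, along whose radial lines the fundamental theorem of calculus applies pointwise.
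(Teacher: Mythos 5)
Your proposal is correct and follows essentially the same route as the paper: the paper's Lemma~\ref{T5} is exactly your key identity $f(x)-f_{S(x,t)}=t\,\phi_t*\nabla f(x)$, proved there by differentiating the spherical average in the radius (which, after Fubini, is your fundamental-theorem-of-calculus-along-rays computation in polar coordinates) and then extended from $C_0^\infty$ to $W^{1,1}_{\rm loc}$ by approximation. Your treatment of the mollification step is in fact more detailed than the paper's one-line remark.
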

The functions $\phi_t$ and $\nabla f$ take values in $\bbbr^n$ and $\phi_t*\nabla f$ is defined as 
the integral of the scalar product
$$
\phi_t*\nabla f(x)=\int_{\bbbr^n}\phi_t(y)\cdot\nabla f(x-y)\, dy.
$$
The inequality $\Vert Tf\Vert_p\leq C\Vert\nabla f\Vert_p$ follows directly from 
Lemma~\ref{T4} and Theorem~\ref{T1.1}.
The proof of the reverse inequality $\Vert \nabla f\Vert_p\leq C\Vert Tf\Vert_p$ will be obtained by a 
standard duality argument;
see Section~\ref{3}, Steps~4 and~5.

Theorem~\ref{T1} can be regarded as a characterization of the Sobolev space $W^{1,p}(\bbbr)$, see \eqref{e1.4}
and a comment that follows. Theorems~\ref{T1.1} and~\ref{T2} are higher dimensional generalizations of
Theorem~\ref{T1}. However, in higher dimensions Theorem~\ref{T1.1} cannot be interpreted as a characterization
of $W^{1,p}(\bbbr^n)$ and in Theorem~\ref{T2} we can characterize $W^{1,p}(\bbbr^n)$ but only for 
$p>2n/(n+1)$. From this perspective Theorem~\ref{T3} is a more natural generalization on Theorem~\ref{T1} to higher
dimensions: it works for all $1<p<\infty$ and it gives a characterization of $W^{1,p}(\bbbr^n)$ in terms of the Marcinkiewicz
integral of the gradient \eqref{e11}, just like the characterization \eqref{e1.4} in dimension $n=1$.

Theorem~\ref{T3} is related to a recent characterization of $W^{1,p}(\bbbr^n)$ due to
Alabern, Mateu and Verdera \cite[Theorem~1]{AMV} where instead of subtracting averages over spheres we
subtract averages over balls.
\begin{theorem}
\label{T3.5}
Suppose that $f\in L^p(\bbbr^n)$, $1<p<\infty$. Then $f\in W^{1,p}(\bbbr^n)$ if and only if
$Sf\in L^p(\bbbr^n)$, where
$$
Sf(x) = \Big(\int_0^\infty |f(x)-f_{B(x,t)}|^2\, \frac{dt}{t^3}\Big)^{1/2}.
$$
Moreover there is a constant $C=C(n,p)\geq 1$ such that
\begin{equation}
\label{e20}
C^{-1}\Vert \nabla f\Vert_p\leq \Vert Sf\Vert_p\leq C\Vert\nabla f\Vert_p.
\end{equation}
\end{theorem}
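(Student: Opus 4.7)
The plan is to reduce Theorem~\ref{T3.5} to Theorem~\ref{T3} by exploiting the polar-coordinate identity
\begin{equation*}
f(x) - f_{B(x,t)} = \frac{n}{t^n}\int_0^t r^{n-1}\bigl(f(x) - f_{S(x,r)}\bigr)\, dr,
\end{equation*}
which expresses the ball average as a weighted radial integral of spherical averages. This is the key observation that allows the ball version to piggyback on the sphere version.

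For the upper bound $\Vert Sf\Vert_p \le C\Vert\nabla f\Vert_p$ I would apply the Cauchy--Schwarz inequality in $L^2(r^{n-1}\,dr)$ to the identity above and obtain
\begin{equation*}
|f(x) - f_{B(x,t)}|^2 \le \frac{n}{t^n}\int_0^t r^{n-1}|f(x) - f_{S(x,r)}|^2\, dr.
\end{equation*}
Dividing by $t^3$, integrating over $t\in(0,\infty)$, and swapping the order of integration by Fubini, the inner integral $\int_r^\infty t^{-n-3}\,dt = r^{-n-2}/(n+2)$ collapses and leaves the pointwise estimate $Sf(x) \le \sqrt{n/(n+2)}\, Tf(x)$. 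The upper inequality in \eqref{e20} then follows immediately from \eqref{e19}.

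For the lower bound $\Vert\nabla f\Vert_p \le C\Vert Sf\Vert_p$ I would mirror the duality argument used for Theorem~\ref{T3} (Steps~4 and~5 of Section~\ref{3}). The first ingredient is an analogue of Lemma~\ref{T4}:
\begin{equation*}
f(x) - f_{B(x,t)} = \psi_t * \nabla f(x),\qquad
\psi_t(y) = \frac{1}{n\omega_n}\Big(\frac{1}{|y|^n} - \frac{1}{t^n}\Big) y\, \chi_{B(0,t)}(y),
\end{equation*}
which one derives by substituting the representation in Lemma~\ref{T4} into the polar identity above and applying Fubini. This realizes $Sf$ as a Littlewood--Paley $g$-function of $\nabla f$ associated to the $L^1$-dilations of the vector-valued kernel $\psi=\psi_1$, which has mean zero (by oddness), compact support, and integrable singularity of order $|y|^{-(n-1)}$ at the origin. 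One then invokes a Calder\'on-type reproducing formula for $\nabla f$ involving $\psi_t$, applies Cauchy--Schwarz in the parameter $t$, and uses $L^{p'}$-boundedness of the adjoint $g$-function to close the duality.

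The main obstacle is the lower bound. The pointwise comparison $Sf\le C\, Tf$ handles the easy direction almost for free, but the duality step requires more care: $\psi_t$ is not of the form appearing in \eqref{e1.2}, but rather decomposes as the sum of a genuine Marcinkiewicz kernel and a smooth bounded averaging kernel $t^{-n} y\, \chi_{B(0,t)}$. The $L^{p'}$-boundedness of the adjoint $g$-function will follow by combining Theorem~\ref{T1.1} for the Marcinkiewicz piece with standard Littlewood--Paley theory for smooth mollifiers applied to the averaging piece.
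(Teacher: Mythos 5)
Your upper bound is correct and takes a genuinely different route from the paper. Writing $f(x)-f_{B(x,t)}=\frac{n}{t^n}\int_0^t r^{n-1}\bigl(f(x)-f_{S(x,r)}\bigr)\,dr$ and applying Cauchy--Schwarz with respect to the probability measure $nt^{-n}r^{n-1}\,dr$ followed by Fubini does give the pointwise bound $Sf\leq\sqrt{n/(n+2)}\,Tf$, and combined with \eqref{e19} this yields $\Vert Sf\Vert_p\leq C\Vert\nabla f\Vert_p$. The paper instead decomposes the kernel of Lemma~\ref{T3.51} as $\psi=\phi-\eta$ with $\eta(x)=\frac{x}{n\omega_n}\chi_{B(0,1)}$ and controls the $\eta$-piece by Theorem~\ref{T6}; it proves a pointwise comparison of $Sf$ and $Tf$ separately (Proposition~\ref{T3.6}, by an integration by parts, and only after the main theorems) while remarking that it too would give the upper bound. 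Your Cauchy--Schwarz derivation is shorter than that integration by parts and produces the same constant that the paper's own computation actually yields. (Two cosmetic points: your $\psi_t$ is $t^{1-n}\psi(\cdot/t)$ rather than the $L^1$-normalized dilation, which is why the measure is $dt/t^3$ and not $dt/t$; and $\eta$ is not smooth across $|y|=1$, though it does satisfy \eqref{e5} and \eqref{e6}, which is all Theorem~\ref{T6} needs.)

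For the lower bound you are on the paper's track (Lemma~\ref{T3.51} plus the duality of Steps~4--5), but your sketch omits the step that carries the real content: the non-degeneracy of the square function in $L^2$. Boundedness of the adjoint $g$-function on $L^{p'}$ is not enough; you must also know that the polarized pairing $\int_0^\infty\int(\psi_t*\nabla f)\cdot(\psi_t*\nabla g)\,dx\,\frac{dt}{t^3}$ reproduces a \emph{nonzero} constant multiple of a pairing equivalent to $\int\nabla f\cdot\nabla g$. In the paper this is Lemma~\ref{T14}, which rests on the structural fact (Lemma~\ref{T10}) that $\hat\psi(\xi)=i\frac{\xi}{|\xi|}h(|\xi|)$ with $0<\int_0^\infty|h(t)|^2\,\frac{dt}{t}<\infty$, and on composing with the Riesz transform and $(-\Delta)^{1/2}$ so that the duality is run against scalar $L^{p'}$ functions rather than against gradient fields (passing from a supremum over gradient test fields back to $\Vert\nabla f\Vert_p$ itself requires Riesz transforms on $L^{p'}$). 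Your ``Calder\'on-type reproducing formula for $\nabla f$'' is exactly this, but you need to say why it exists for this particular $\psi$. Finally, the theorem is an equivalence, and your proposal never shows that $f\in L^p$ with $Sf\in L^p$ implies $f\in W^{1,p}$; the pointwise inequality goes the wrong way for this. The paper's Step~6 does it by mollifying, using Minkowski's integral inequality to get $Sf_\eps\leq Sf*\vi_\eps$, hence a uniform bound on $\Vert\nabla f_\eps\Vert_p$, and concluding by weak compactness. That step is missing from your proposal and must be added.
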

We will provide a new, and on a technical side much
simpler, proof of this result based on the following representation formula.
Let
$$
\psi(x)=\frac{1}{n\omega_n}\Big(\frac{x}{|x|^n}-x\Big)\chi_{B(0,1)}(x)
\quad
\mbox{and}
\quad
\psi_t(x)=t^{-n}\psi(x/t).
$$
\begin{lemma}
\label{T3.51}
If $f\in W^{1,1}_{\rm loc}(\bbbr^n)$, then
$$
Sf(x)=
\Big(\int_0^\infty |\psi_t*\nabla f(x)|^2\frac{dt}{t}\Big)^{1/2} 
=
\frac{1}{n\omega_n}\Big(\int_0^\infty\Big|\int_{|y|\leq t}
\Big(\frac{y}{|y|^n}-\frac{y}{t^n}\Big)\cdot\nabla f(x-y)\, dy\Big|^2\frac{dt}{t^3}\Big)^{1/2}
$$
for almost all $x\in\bbbr^n$.
\end{lemma}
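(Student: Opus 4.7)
The plan is to reduce Lemma~\ref{T3.51} to the single pointwise identity
\begin{equation*}
f(x) - f_{B(x,t)} \;=\; \frac{1}{n\om_n} \int_{|y|\leq t} \Big(\frac{y}{|y|^n} - \frac{y}{t^n}\Big) \cdot \nabla f(x-y)\, dy,
\end{equation*}
valid for almost every $x\in\bbbr^n$ and every $t>0$. The second equality in the lemma then follows by squaring both sides and integrating against $dt/t^3$. For the first equality, a direct rescaling gives
\begin{equation*}
\psi_t(y) = t^{-n}\psi(y/t) = \frac{1}{n\om_n\,t}\Big(\frac{y}{|y|^n} - \frac{y}{t^n}\Big)\chi_{B(0,t)}(y),
\end{equation*}
so that $\psi_t*\nabla f(x) = (f(x)-f_{B(x,t)})/t$ and hence $|\psi_t*\nabla f(x)|^2\,dt/t$ coincides with $|f(x)-f_{B(x,t)}|^2\,dt/t^3$.

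To prove the key identity I would first assume $f\in C^\infty(\bbbr^n)$. Passing to polar coordinates,
\begin{equation*}
f(x) - f_{B(x,t)} = \frac{1}{\om_n t^n}\int_{S^{n-1}}\int_0^t \bigl(f(x)-f(x+r\om)\bigr)\,r^{n-1}\,dr\,d\sigma(\om),
\end{equation*}
and invoking the fundamental theorem of calculus along each ray, $f(x)-f(x+r\om) = -\int_0^r \nabla f(x+u\om)\cdot\om\,du$, a Fubini exchange over the triangle $\{0\leq u\leq r\leq t\}$ collapses the $r$-integral to $\int_u^t r^{n-1}\,dr=(t^n-u^n)/n$. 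Converting the remaining integral back to Cartesian variables via $y=u\om$, so that $du\,d\sigma(\om)=|y|^{1-n}\,dy$ and $\om=y/|y|$, and finally substituting $y\mapsto -y$, produces precisely the claimed identity.

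For general $f\in W^{1,1}_{\rm loc}(\bbbr^n)$ the ACL characterization of Sobolev functions makes the fundamental theorem of calculus valid along almost every line, and Fubini justifies the polar-coordinates manipulation; both sides of the pointwise identity are well defined for a.e.\ $x$, so the argument goes through without change. The main obstacle is therefore only this regularity bookkeeping, which can alternatively be sidestepped by first establishing the identity for a smooth mollification $f_\eps=f*\eta_\eps$ and then passing to the limit in $L^1_{\rm loc}$ as $\eps\to 0^+$. No deeper analytic input is required — the lemma is essentially a direct computation.
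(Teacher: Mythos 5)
Your proposal is correct and follows essentially the same route as the paper: both reduce the lemma to the pointwise identity $f(x)-f_{B(x,t)}=\frac{1}{n\omega_n}\int_{|y|\le t}\bigl(\frac{y}{|y|^n}-\frac{y}{t^n}\bigr)\cdot\nabla f(x-y)\,dy$ (equivalently $t\,\psi_t*\nabla f(x)$), prove it for smooth $f$ by a fundamental-theorem-of-calculus plus Fubini computation, and handle general $f\in W^{1,1}_{\rm loc}$ by mollification. The only difference is organizational --- the paper writes $f(x)-f_{B(x,t)}=-\int_0^t\frac{d}{d\tau}\barint_{B(x,\tau)}f\,d\tau$ and then applies Fubini, while you integrate along rays in polar coordinates --- and both computations land on the same formula.
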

The original proof of Theorem~\ref{T3.5} is based on vector valued singular integrals and the main
technical difficulty is a verification that a suitable vector valued integral operator satisfies the H\"ormander condition.
Our approach is much simpler as we will show that the inequality $\Vert Sf\Vert_p\leq C\Vert\nabla f\Vert_p$ in
Theorem~\ref{T3.5} is a direct consequence of Lemma~\ref{T3.51}, Theorem~\ref{T1.1},
and the following classical result in the Littlewood-Paley theory due to 
Benedek, Calder\'on and Panzone \cite{BCP}, \cite[Chapter~XII, Theorem~3.5]{torchinsky}.
(Alternatively one can use a result of Sato, Theorem~\ref{T11}, in place of Theorems~\ref{T1.1} and~\ref{T6}, see \eqref{efef}.)
\begin{theorem}
\label{T6}
Let $\phi\in L^1(\bbbr^n)$ be such that
\begin{equation}
\label{e4}
\int_{\bbbr^n}\phi(x)\, dx = 0.
\end{equation}
Assume that there are constants $C,\alpha>0$ such that
\begin{equation}
\label{e5}
|\phi(x)|\leq C(1+|x|)^{-n-\alpha},
\quad
x\in\bbbr^n
\end{equation}
and
\begin{equation}
\label{e6}
\int_{\bbbr^n}|\phi(x+h)-\phi(x)|\, dx \leq C|h|^\alpha,
\quad
h\in\bbbr^n.
\end{equation}
Let $\phi_t(x)=t^{-n}\phi(x/t)$. Then the operator
$$
G f(x) =
\Big(\int_0^\infty |\phi_t*f(x)|^2\, \frac{dt}{t}\Big)^{1/2}
$$
is bounded in $L^p$, $1<p<\infty$ and of weak type $(1,1)$.
\end{theorem}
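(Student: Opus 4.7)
The plan is to view $G$ as a Hilbert-space-valued convolution operator and apply vector-valued Calder\'on-Zygmund theory. Let $\mathcal{H}=L^2((0,\infty),dt/t)$ and define the $\mathcal{H}$-valued kernel $\vec K(x)(t)=\phi_t(x)$, so that $Gf(x)=\|\vec K*f(x)\|_\mathcal{H}$. Bounding $G\colon L^p(\mathbb{R}^n)\to L^p(\mathbb{R}^n)$ is equivalent to bounding $f\mapsto \vec K * f$ from $L^p(\mathbb{R}^n)$ to $L^p(\mathbb{R}^n;\mathcal{H})$, and, given an $L^2$ bound together with the vector-valued H\"ormander condition
\[
\sup_{y\neq 0}\int_{|x|>2|y|}\|\vec K(x-y)-\vec K(x)\|_\mathcal{H}\,dx<\infty,
\]
the Calder\'on-Zygmund decomposition (with the scalar modulus replaced by $\|\cdot\|_\mathcal{H}$) yields weak type $(1,1)$, after which Marcinkiewicz interpolation and duality give $L^p$ for $1<p<\infty$.

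For the $L^2$ bound, Plancherel and Fubini followed by the substitution $s=t|\xi|$ give
\[
\|Gf\|_2^2=\int_{\mathbb{R}^n}|\hat f(\xi)|^2\,\Big(\int_0^\infty|\hat\phi(s\,\xi/|\xi|)|^2\,ds/s\Big)\,d\xi,
\]
so the task reduces to bounding $\int_0^\infty|\hat\phi(s\omega)|^2\,ds/s$ uniformly in $\omega\in S^{n-1}$. For $s\geq 1$ the standard identity $2\hat\phi(\xi)=\int[\phi(y)-\phi(y-\xi/(2|\xi|^2))]e^{-2\pi i\xi\cdot y}\,dy$ combined with \eqref{e6} gives $|\hat\phi(\xi)|\lesssim|\xi|^{-\alpha}$; for $s\leq 1$ the cancellation \eqref{e4} (so that $\hat\phi(0)=0$) together with the decay \eqref{e5} yields $|\hat\phi(\xi)|\lesssim|\xi|^\beta$ for some $\beta>0$. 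Both bounds are square-integrable against $ds/s$.

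The main work is the H\"ormander condition. Splitting the $t$-integral at $t=|y|$, the piece with $t\leq|y|$ is easy: on $|x|>2|y|$ one has $|x-y|,|x|\gtrsim|x|$, and the pointwise bound $|\phi_t(z)|\leq Ct^\alpha(t+|z|)^{-n-\alpha}$ from \eqref{e5} gives a square-root dominated by $C|y|^\alpha|x|^{-n-\alpha}$, which integrates in $x$ to a constant. The piece with $t\geq|y|$ is the main obstacle: the rescaled form $\|\phi_t(\cdot-y)-\phi_t(\cdot)\|_1\leq C(|y|/t)^\alpha$ of \eqref{e6} cannot be inserted directly because Minkowski's inequality exchanging $L^1_x$ with $L^2_t(dt/t)$ goes the wrong way. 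My plan is to apply a weighted Cauchy--Schwarz for some $\epsilon\in(0,\alpha)$,
\[
\int_{|x|>2|y|}\!\!II(x)^{1/2}\,dx\leq\Big(\int_{|x|>2|y|}\!\!II(x)\,|x|^{n+\epsilon}dx\Big)^{1/2}\Big(\int_{|x|>2|y|}\!\!|x|^{-n-\epsilon}dx\Big)^{1/2},
\]
where $II(x)$ denotes the $t\geq|y|$ part of $\|\vec K(x-y)-\vec K(x)\|_\mathcal{H}^2$. The second factor is $\sim|y|^{-\epsilon/2}$. For the first, one writes $|\phi_t(x-y)-\phi_t(x)|^2\leq |\phi_t(x-y)-\phi_t(x)|\cdot(|\phi_t(x-y)|+|\phi_t(x)|)$, invokes the pointwise bound in the second factor, applies Fubini, and absorbs the weight $|x|^{n+\epsilon}$ into $(t+|x|)^{n+\alpha}$ (using $\epsilon<\alpha$) to reduce to the $L^1$-estimate $\int|\phi_t(x-y)-\phi_t(x)|\,dx\leq C(|y|/t)^\alpha$; after the $dt/t$ integration this yields $\sim|y|^{\epsilon/2}$, and the two factors multiply to a constant. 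The hard point is exactly this $t\geq|y|$ estimate, where the $L^1$-type smoothness in \eqref{e6} must be converted into an $L^2(dt/t)$-type control.
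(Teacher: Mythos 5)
The paper does not prove Theorem~\ref{T6}; it is quoted as a classical result of Benedek--Calder\'on--Panzone \cite{BCP}, \cite[Chapter~XII, Theorem~3.5]{torchinsky}, so there is no internal proof to compare against. Your argument is the standard vector-valued Calder\'on--Zygmund proof of that classical result, and it is correct: the $L^2$ bound via the two Fourier decay estimates ($|\hat\phi(\xi)|\lesssim|\xi|^{\beta}$ near $0$ from \eqref{e4}--\eqref{e5}, $|\hat\phi(\xi)|\lesssim|\xi|^{-\alpha}$ at infinity from \eqref{e6}) is right, and your weighted Cauchy--Schwarz device for the $t\geq|y|$ part of the H\"ormander integral genuinely closes the only delicate step --- the product $|y|^{\epsilon/2}\cdot|y|^{-\epsilon/2}$ is uniform in $y$, as required.
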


The main result of \cite[Theorem~3]{AMV} is actually more general since it also covers the case of higher order
derivatives and the case of Bessel potential spaces. It is possible to modify Theorem~\ref{T3}
in a way that it would
cover the case of higher order derivatives, but we decided to restrict to the case of the first order
derivatives for the sake of simplicity.

It is interesting to point out that the functions $Sf$ and $Tf$ satisfy the following 
pointwise inequality. 
\begin{proposition}
\label{T3.6}
If $f\in W^{1,p}(\bbbr^n)$, $1\leq p\leq \infty$, then
\begin{equation}
\label{e3}
Sf(x)\leq \frac{n}{n+2} Tf(x)
\quad
\mbox{a.e.}
\end{equation}
\end{proposition}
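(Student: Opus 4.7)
The plan is to exploit the elementary fact that the ball average $f_{B(x,t)}$ is itself a weighted average of the spherical means $f_{S(x,s)}$ for $s\in[0,t]$, and to combine this with Jensen's inequality and Fubini. Decomposing the ball integral in polar coordinates gives
$$
f_{B(x,t)} \;=\; \frac{n}{t^n}\int_0^t s^{n-1} f_{S(x,s)}\, ds,
$$
and since $\frac{n}{t^n}\int_0^t s^{n-1}\,ds = 1$ this rearranges to
$$
f(x) - f_{B(x,t)} \;=\; \frac{n}{t^n}\int_0^t s^{n-1}\bigl(f(x)-f_{S(x,s)}\bigr)\,ds.
$$
Thus $f(x)-f_{B(x,t)}$ is the integral of $f(x)-f_{S(x,\cdot)}$ against the probability measure $d\mu_t(s) = \frac{n}{t^n}s^{n-1}\chi_{[0,t]}(s)\,ds$ on $(0,\infty)$, so Jensen's inequality gives the pointwise squared bound
$$
\bigl|f(x) - f_{B(x,t)}\bigr|^2 \;\leq\; \frac{n}{t^n}\int_0^t s^{n-1}\bigl|f(x)-f_{S(x,s)}\bigr|^2\,ds.
$$

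From here I would multiply by $t^{-3}$, integrate $t$ over $(0,\infty)$, and swap the order of integration by Tonelli (legitimate since the integrand is nonnegative). The inner integral evaluates cleanly to
$$
\int_s^\infty t^{-n-3}\,dt \;=\; \frac{1}{(n+2)\,s^{n+2}},
$$
so the whole double integral telescopes to
$$
Sf(x)^2 \;\leq\; \frac{n}{n+2}\int_0^\infty \bigl|f(x)-f_{S(x,s)}\bigr|^2\,s^{-3}\,ds \;=\; \frac{n}{n+2}\,Tf(x)^2,
$$
from which the desired pointwise comparison of $Sf$ and $Tf$ follows on taking square roots.

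I see no genuine obstacle in this argument: each step is a one-line classical inequality or a routine Tonelli calculation, and the only thing worth verifying is the elementary fact that $d\mu_t$ is a probability measure (which is immediate). In particular the proof bypasses Lemma~\ref{T4} and Lemma~\ref{T3.51} as well as any vector-valued singular integral or Littlewood--Paley input; the estimate is a direct measure-theoretic consequence of the identity expressing $f_{B(x,t)}$ as a weighted average of $f_{S(x,\cdot)}$, and the exponent $n+2$ in the constant comes solely from the power-of-$t$ integration $\int_s^\infty t^{-n-3}\,dt$.
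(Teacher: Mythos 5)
Your argument is correct and reaches the same inequality as the paper, but by a genuinely different and in fact simpler route. The paper integrates by parts in $t$, identifies the resulting term as the cross term $\frac{2n\omega_n^2}{2n+2}\int_\eps^T(f(x)-f_{B(x,t)})(f(x)-f_{S(x,t)})\,\frac{dt}{t^3}$, applies $ab\le(a^2+b^2)/2$, absorbs the ball term into the left-hand side, and then still has to show that the boundary terms $A(T)-A(\eps)$ vanish: the $\eps\to 0$ end uses the $L^1$-differentiability theorem for Sobolev functions, and the $T\to\infty$ end uses the decay $\frac{1}{T}\barint_{B(x,T)}|f|\to 0$ (which is why the remark after the proposition records that extra hypothesis). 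Your route --- writing $f(x)-f_{B(x,t)}$ as the average of $f(x)-f_{S(x,s)}$ against the probability density $\frac{n}{t^n}s^{n-1}\chi_{[0,t]}(s)\,ds$, applying Jensen, and exchanging the order of integration by Tonelli --- yields the identical bound $Sf(x)^2\le\frac{n}{n+2}\,Tf(x)^2$ with no boundary terms, no differentiation theorem, and no hypothesis at infinity; it works verbatim for any locally integrable $f$ for which the spherical averages are defined. What each approach buys: yours is shorter and needs weaker hypotheses; the paper's integration by parts makes the cross term $\int(f-f_{B})(f-f_{S})\,dt/t^3$ explicit, but for this proposition that gives nothing extra.

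One point to correct in your final step: taking square roots of $Sf^2\le\frac{n}{n+2}Tf^2$ gives $Sf\le\sqrt{n/(n+2)}\,Tf$, and since $n/(n+2)<1$ this constant is \emph{larger} than the $\frac{n}{n+2}$ asserted in \eqref{e3}, so you have not literally proved the displayed inequality. The paper's own proof stops at exactly the same squared inequality, so this appears to be a defect of the statement (which should presumably read $Sf(x)^2\le\frac{n}{n+2}\,Tf(x)^2$) rather than of your argument; you should state your conclusion in the squared form, or with the constant $\sqrt{n/(n+2)}$, rather than claiming \eqref{e3} as written.
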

Actually
it follows from the proof that \eqref{e3} holds true under a weaker assumption that
$f\in W^{1,1}_{\rm loc}(\bbbr^n)$ is such that
$$
\lim_{t\to \infty} \frac{1}{t}\, \barint_{B(x,t)} |f(y)|\, dy = 0.
$$
As we pointed out, the inequality $\Vert Tf\Vert_p\leq C\Vert\nabla f\Vert_p$ is a direct
consequence of Theorem~\ref{T1.1} and the elementary formula \eqref{e11}.
This combined with Proposition~\ref{T3.6} proves also the inequality $\Vert Sf\Vert_p\leq C\Vert\nabla f\Vert_p$,
but the proof of the reverse inequality $\Vert \nabla f\Vert_p\leq C\Vert Sf\Vert_p$ cannot be directly concluded from
Theorem~\ref{T3} and Proposition~\ref{T3.6}. To prove the reverse inequality we will use 
Lemma~\ref{T3.51} instead of Proposition~\ref{T3.6}.
For this reason we will prove Theorem~\ref{T3.5} directly without referring 
to Proposition~\ref{T3.6}. We will prove Proposition~\ref{T3.6} in Section~\ref{4}, after the proofs of Theorems~\ref{T3} 
and~\ref{T3.5} since it will not be used in these proofs.

We believe that the content of this paper will be of interest mostly for the community of people working with 
geometric aspects of Sobolev spaces. Since many of the researchers working in this area do not use tools 
form harmonic analysis, we decided to make the paper self-contained and easy to read by providing all
necessary details. But we also hope that researchers whose main area of research is harmonic analysis will find this paper
interesting too.

Notation used in the paper is pretty standard. The Fourier transform is defined by
$$
\hat{f}(\xi)=\int_{\bbbr^n} e^{-2\pi ix\cdot\xi}f(x)\, dx.
$$
By $C$ we will denote a positive constant whose value may change in a single string of estimates.

The paper is organized as follows.
In Section~\ref{2} we prove Lemmas~\ref{T4} and~\ref{T3.51}. The proofs are very elementary.
In Section~\ref{3} we prove Theorems~\ref{T3} and~\ref{T3.5}. The proofs use some harmonic analysis
including Theorems~\ref{T1.1} and~\ref{T6}. 
In Section~\ref{3a} we prove the second main result of the paper, Theorem~\ref{main2}
which is a generalization of Theorems~\ref{T3} and~\ref{T3.5} to the case of weighted Sobolev spaces
with a Muckenhoupt weight. Theorems~\ref{T3} and~\ref{T3.5} are special cases of Theorem~\ref{main2}, but we decided to 
include separate proofs in the unweighted case, because the proofs are based on more elementary arguments
(in particular we could use classical Theorems~\ref{T1.1} and~\ref{T6} in place of a more complicated Theorem~\ref{T11})
and the proofs of Theorems~\ref{T3} and~\ref{T3.5} are in fact used in the proof of Theorem~\ref{main2}.
Proposition~\ref{T3.6} which gives an inequality between $Sf$ and $Tf$
is presented in Section~\ref{4}. This result is not needed in the proofs of Theorems~\ref{T3}, \ref{T3.5}, and~\ref{main2}.
In Section~\ref{5} we include final remarks which are of independent interest - they are not needed in the proofs 
of the results in the earlier sections.

\noindent
{\bf Acknowledgments.} We would like to thank Yibiao Pan and Joan Verdera for helpful discussions.
Yibiao Pan showed us that the duality argument works also in the weighted case which was needed to complete the proof of
Theorem~\ref{main2}. We would also like to thank Shuichi Sato for providing a copy of his very recent work \cite{sato2}, where 
some of the results of this paper have been generalized.

\section{Proof of Lemmas~\ref{T4} and~\ref{T3.51}}
\label{2}

Both of the lemmas follow immediately from the lemma below.
\begin{lemma}
\label{T5}
If $f\in W^{1,1}_{\rm loc}(\bbbr^n)$, then
for all $t>0$ and almost all $x\in\bbbr^n$ we have
$$
f(x)-f_{S(x,t)}=
\frac{1}{n\omega_n}\int_{B(x,t)}\nabla f(y)\cdot\frac{x-y}{|x-y|^n}\, dy,
$$
$$
f(x)-f_{B(x,t)} =
\frac{1}{n\omega_n}\int_{B(x,t)}\nabla f(y)\cdot\frac{x-y}{|x-y|^n}\, dy -
\frac{1}{n}\,\barint_{B(x,t)}\nabla f(y)\cdot(x-y)\, dy.
$$
\end{lemma}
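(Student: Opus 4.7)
The plan is to verify both formulas first for $f\in C^\infty$, where everything is justified by the fundamental theorem of calculus and Fubini, and then pass to general $f\in W^{1,1}_{\loc}(\bbbr^n)$ by a standard approximation argument.

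For the spherical formula, the plan is to parametrize $S(x,t)$ by $y=x+t\omega$ with $\omega\in S^{n-1}$, so that $d\sigma(y)=t^{n-1}d\sigma(\omega)$ and
\[
f_{S(x,t)}-f(x)=\frac{1}{n\omega_n}\int_{S^{n-1}}\bigl(f(x+t\omega)-f(x)\bigr)\,d\sigma(\omega).
\]
Then I would write the inner difference as $\int_0^t \omega\cdot\nabla f(x+s\omega)\,ds$ and convert the resulting $(s,\omega)$-integral to a Cartesian integral over $B(x,t)$ via $y=x+s\omega$, whose Jacobian is $s^{n-1}=|y-x|^{n-1}$. Dividing by $|y-x|^{n-1}$ produces the kernel $(y-x)/|y-x|^{n}$, and rewriting $y-x=-(x-y)$ gives exactly the desired formula $f(x)-f_{S(x,t)}=\frac{1}{n\omega_n}\int_{B(x,t)}\nabla f(y)\cdot\frac{x-y}{|x-y|^n}\,dy$.

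For the ball formula, the plan is to foliate $B(x,t)$ by spheres and write
\[
f(x)-f_{B(x,t)}=\frac{n}{t^n}\int_0^t s^{n-1}\bigl(f(x)-f_{S(x,s)}\bigr)\,ds,
\]
then plug in the spherical formula already established. After swapping the order of integration by Fubini (for fixed $y\in B(x,t)$, the variable $s$ runs over $(|y-x|,t)$), the inner $s$-integral evaluates to $(t^n-|y-x|^n)/n$. Splitting this into the $t^n$-piece and the $|y-x|^n$-piece, and noting that $\frac{|y-x|^n}{|x-y|^n}=1$ so the second piece leaves behind $(x-y)$ in the integrand, yields the advertised decomposition $\frac{1}{n\omega_n}\int_{B(x,t)}\nabla f\cdot\frac{x-y}{|x-y|^n}\,dy-\frac{1}{n}\barint_{B(x,t)}\nabla f\cdot(x-y)\,dy$.

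The only subtlety—and the step I would be most careful about—is justifying the identities for $f\in W^{1,1}_{\loc}$ rather than just $C^\infty$. My plan is the standard mollification: take $f_\varepsilon=f*\eta_\varepsilon\to f$ in $W^{1,1}_{\loc}$, apply the formulas for $f_\varepsilon$, and pass to the limit. On the left side, the spherical and ball averages $f_{\varepsilon,S(x,t)}$ and $f_{\varepsilon,B(x,t)}$ converge to $f_{S(x,t)}$ and $f_{B(x,t)}$ for a.e.\ $x$ (using Fubini on the trace formula over spheres, valid for a.e.\ $t$, combined with the fact that $f$ has a well-defined restriction to a.e.\ sphere for a.e.\ $x$). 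On the right side, the kernel $(x-y)/|x-y|^n$ is locally integrable on $\bbbr^n$, so convolution with $\nabla f_\varepsilon$ converges in $L^1_{\loc}$ to the corresponding expression involving $\nabla f$; the second term on the right of the ball formula is clearly continuous in $\nabla f$ in $L^1_{\loc}$. Thus both identities hold for a.e.\ $x$ and all $t>0$, as claimed.
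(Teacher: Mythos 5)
Your proposal is correct and follows essentially the same route as the paper: the fundamental theorem of calculus along radii, polar coordinates, and Fubini for smooth $f$, followed by mollification for general $f\in W^{1,1}_{\rm loc}$. The only cosmetic difference is that you obtain the ball identity by averaging the sphere identity over radii (evaluating $\int_{|x-y|}^t s^{n-1}\,ds$), whereas the paper differentiates the ball average in the radius directly (evaluating $\int_{|x-y|}^t \tau^{-n-1}\,d\tau$); the two computations are equivalent.
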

\begin{proof}
We can assume that $f\in C_0^\infty(\bbbr^n)$. The general case will follow by approximation.
Note that the restriction (trace) of $f\in W^{1,1}_{\rm loc}$ to $S(x,t)$ is well defined
\cite[Section~4.3, Theorem~1]{EG}.
We have
\begin{eqnarray*}
f(x)-\barint_{S(x,t)} f(y)d\sigma(y)
& = & 
-\int_0^t\frac{d}{d\tau}\Big(\barint_{S(x,\tau)}f(y)\, d\sigma(y)\Big)\, d\tau \\
& = &
-\int_0^t \frac{d}{d\tau}\Big(\barint_{S(0,1)} f(x+\tau z)\, d\sigma(z)\Big)\, d\tau \\
& = &
-\int_0^t\barint_{S(0,1)} \nabla f(x+\tau z)\cdot z\, d\sigma(z)\, d\tau \\
& = &
-\int_0^t \barint_{S(x,\tau)} \nabla f(y)\cdot\frac{y-x}{|y-x|}\, d\sigma(y)\, d\tau \\
& = &
\frac{1}{n\omega_n}\int_0^t\int_{S(x,\tau)}\nabla f(y)\cdot\frac{x-y}{|x-y|^n}\, d\sigma(y)\, d\tau \\
& = &
\frac{1}{n\omega_n}\int_{B(x,t)}\nabla f(y)\cdot\frac{x-y}{|x-y|^n}\, dy.
\end{eqnarray*}
This proves the first identity. The proof of the second one is similar.
\begin{eqnarray*}
f(x)-\barint_{B(x,t)}f(y)\, dy 
& = &
-\int_0^t\frac{d}{d\tau}\Big(\barint_{B(x,\tau)}f(y)\, dy\Big)\, d\tau \\
& = &
-\int_0^t\frac{d}{d\tau}\Big(\barint_{B(0,1)}f(x+\tau z)\ dz\Big)\, d\tau \\
& = &
-\int_0^t\barint_{B(0,1)}\nabla f(x+\tau z)\cdot z\, dz\, d\tau \\
& = &
-\int_0^t\barint_{B(x,\tau)}\nabla f(y)\cdot\frac{y-x}{\tau}\, dy\, d\tau \\
& = &
\frac{1}{\omega_n}\int_0^t\int_{B(x,\tau)}\nabla f(y)\cdot\frac{x-y}{\tau^{n+1}}\, dy\, d\tau \\
& = &
\frac{1}{\omega_n}\int_{B(x,t)}\Big(\int_{|x-y|}^t\frac{d\tau}{\tau^{n+1}}\Big)\nabla f(y)\cdot(x-y)\, dy \\
& = &
\frac{1}{n\omega_n}\int_{B(x,t)}\Big(\frac{1}{|x-y|^n}-\frac{1}{t^n}\Big)\nabla f(y)\cdot(x-y)\, dy.
\end{eqnarray*}
The proof is complete.
\end{proof}

\section{Proof of Theorems~\ref{T3} and~\ref{T3.5}}
\label{3}

\noindent
{\bf Step 1.} 
$\Vert Tf\Vert_p\leq C\Vert\nabla f\Vert_p$ and $\Vert Sf\Vert_p\leq C\Vert\nabla f\Vert_p$.

As we already pointed out the inequality $\Vert Tf\Vert_p\leq C\Vert\nabla f\Vert_p$ for $f\in W^{1,p}$ follows directly from
Theorem~\ref{T1.1} and Lemma~\ref{T4}. To prove the inequality $\Vert Sf\Vert_p\leq C\Vert\nabla f\Vert_p$ observe that
\begin{equation}
\label{e23}
\psi(x)=\phi(x)-\eta(x),
\quad
\mbox{where}
\quad
\eta(x)=\frac{x}{n\omega_n}\chi_{B(0,1)}.
\end{equation}
Hence
$$
Sf(x)\leq Tf(x)+ Wf(x),
$$
where
$$
Wf(x)=\Big(\int_0^\infty \big|\eta_t*\nabla f(x)\big|^2\frac{dt}{t}\Big)^{1/2}.
$$
It remains to show that $\Vert Wf\Vert_p\leq C\Vert\nabla f\Vert_p$, $f\in W^{1,p}$. This is however,
a consequence of Theorem~\ref{T6}. Indeed, the function $\eta$ clearly satisfies 
\eqref{e4} and \eqref{e5} with $\alpha=1$ and condition \eqref{e6} also holds with $\alpha=1$
which can be justified as follows.

If $|h|>1/2$, then
$$
\int_{\bbbr^n}|\eta(x+h)-\eta(x)|\, dx \leq 2\Vert\eta\Vert_1\leq C|h|.
$$

If $|h|\leq 1/2$, then
$$
n\omega_n \int_{\bbbr^n}|\eta(x+h)-\eta(x)|\, dx \leq
\int_{B(0,1+|h|)\setminus B(0,1-|h|)} 2\, dx +
\int_{B(0,1-|h|)} |h|\, dx\leq C|h|.
$$

\noindent
{\bf Step 2.} Square functions $\tilde{T}g$ and $\tilde{S}g$.

In this subsection we modify the definitions of the square functions $Tf$ and $Sf$
and prove boundedness of these modified square functions in $L^p$. It will play a crucial role in the 
proof of the reverse inequalities $\Vert \nabla f\Vert_p\leq \Vert Tf\Vert_p$ and 
$\Vert \nabla f\Vert_p\leq \Vert Sf\Vert_p$.

For $g\in L^p(\bbbr^n)$ let
$$
Rg=(R_1g,\ldots,R_ng)
$$
be the vector valued Riesz transform, where
$$
(R\vi)^\wedge(\xi) =-i\frac{\xi}{|\xi|}\hat{\vi}(\xi)
\quad
\mbox{for $\vi\in\mathscr{S}(\bbbr^n)$.}
$$
Now we define
$$
\tilde{T}g(x)=\Big(\int_0^\infty |\phi_t*Rg(x)|^2\frac{dt}{t}\Big)^{1/2},
\quad
g\in L^p(\bbbr^n),
$$
$$
\tilde{S}g(x)=\Big(\int_0^\infty |\psi_t*Rg(x)|^2\frac{dt}{t}\Big)^{1/2},
\quad
g\in L^p(\bbbr^n).
$$
\begin{lemma}
\label{T13}
For $1<p<\infty$ we have
\begin{equation}
\label{e15}
\Vert\tilde{T}g\Vert_p\leq C\Vert g\Vert_p,
\quad
g\in L^p(\bbbr^n),
\end{equation}
\begin{equation}
\label{e16}
\Vert\tilde{S}g\Vert_p\leq C\Vert g\Vert_p,
\quad
g\in L^p(\bbbr^n).
\end{equation}
\end{lemma}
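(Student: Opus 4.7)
The plan is to reduce Lemma~\ref{T13} to the unweighted estimates already proved in Step~1, by treating the vector-valued Riesz transform $Rg$ as a substitute for $\nabla f$ and then invoking the classical $L^p$-boundedness of each $R_j$, $1<p<\infty$. Since the convolution $\phi_t*Rg$ is defined componentwise as an inner product, I will split the vector-valued square functions into scalar Marcinkiewicz-type pieces and apply the same kernel estimates as in Step~1.

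For \eqref{e15} I would write
\[
\phi_t*Rg(x)=\sum_{j=1}^n (\phi_t)_j*R_j g(x),
\qquad
(\phi_t)_j(x)=\frac{1}{tn\omega_n}\,\frac{x_j}{|x|^n}\chi_{B(0,t)}(x),
\]
and use Minkowski's inequality in $L^2((0,\infty),dt/t)$ to obtain
\[
\tilde T g(x)\le \sum_{j=1}^n\Big(\int_0^\infty |(\phi_t)_j*R_j g(x)|^2\,\frac{dt}{t}\Big)^{1/2}.
\]
A direct computation identifies the $j$-th scalar square function on the right with $(n\omega_n)^{-1}\mu_{\Omega_j}(R_j g)$, where $\Omega_j(y')=y_j/|y|$ is a smooth odd spherical harmonic on $S^{n-1}$ of mean zero. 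Theorem~\ref{T1.1} then gives $\|\mu_{\Omega_j}(R_j g)\|_p\le C\|R_j g\|_p$, and the $L^p$-boundedness of each $R_j$ yields $\|R_j g\|_p\le C\|g\|_p$. Summing in $j$ proves \eqref{e15}.

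For \eqref{e16} I would use the pointwise decomposition $\psi=\phi-\eta$ from \eqref{e23} to get
\[
\tilde S g(x)\le \tilde T g(x)+\Big(\int_0^\infty |\eta_t*Rg(x)|^2\,\frac{dt}{t}\Big)^{1/2}.
\]
The first term is controlled by \eqref{e15}. For the second, I would again expand into components $\eta_t*Rg=\sum_j (\eta_t)_j*R_j g$, where $(\eta_t)_j$ is precisely the $t$-dilation of $\eta_j(x)=(n\omega_n)^{-1}x_j\chi_{B(0,1)}(x)$. Each $\eta_j$ satisfies \eqref{e4}--\eqref{e6} with $\alpha=1$, verified by the same elementary estimate as was carried out for $\eta$ itself in Step~1 (the component kernel inherits the structure of $\eta$). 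Hence Theorem~\ref{T6} bounds each scalar square function in $L^p$, and another application of Minkowski together with the boundedness of the Riesz transforms estimates the second term by $C\|g\|_p$, completing \eqref{e16}.

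The argument is essentially a mechanical combination of Step~1's per-component reductions with the $L^p$-boundedness of the Riesz transforms, so there is no substantive obstacle; the only mildly delicate point is the bookkeeping to confirm that the componentwise decomposition preserves the dilation structure, ensuring that Theorems~\ref{T1.1} and~\ref{T6} can be applied uniformly in $t$ to each scalar kernel.
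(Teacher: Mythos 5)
Your proof is correct and follows essentially the same route as the paper: the paper's own proof is a two-sentence reduction to Theorem~\ref{T1.1} (for $\phi$), the decomposition $\psi=\phi-\eta$ together with Theorem~\ref{T6} (for $\eta$), and the $L^p$-boundedness of the Riesz transforms. Your componentwise expansion, the identification of each scalar piece with $(n\omega_n)^{-1}\mu_{\Omega_j}$ for $\Omega_j(y')=y_j'$, and the observation that each $\eta_j$ inherits \eqref{e4}--\eqref{e6} from $\eta$ simply make explicit the bookkeeping the paper leaves implicit.
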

\begin{proof}
Estimate \eqref{e15} follows from Theorem~\ref{T1.1} and boundedness of the Riesz transform in $L^p$ while
\eqref{e16} follows from
$$
\tilde{S}g\leq \tilde{T}g + \Big(\int_0^\infty |\eta_t*Rg|^2\frac{dt}{t}\Big)^{1/2}
$$
combined with \eqref{e15}, the fact that $\eta$ satisfies the assumptions of Theorem~\ref{T6} and from
boundedness of the Riesz transform in $L^p$.
\end{proof}

\noindent
{\bf Step 3.} $L^2$ isometries.

We will prove that up to a constant factor, the square functions $\tilde{T}$ and $\tilde{S}$ are isometries in $L^2$, i.e.
\begin{lemma}
\label{T14}
There are constants $C_1,C_2>0$ such that
$$
\Vert \tilde{T}g\Vert_2=C_1\Vert g\Vert_2
\quad
\mbox{and}
\quad
\Vert \tilde{S}g\Vert_2=C_2\Vert g\Vert_2
\quad
\mbox{for $g\in L^2(\bbbr^n)$.}
$$
\end{lemma}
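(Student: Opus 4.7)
The plan is to exploit Plancherel's theorem together with a radiality observation on the Fourier side. First I would compute the Fourier multiplier associated to the convolution: using $\widehat{\phi_t}(\xi)=\hat\phi(t\xi)$ and $\widehat{R_j g}(\xi)=-i(\xi_j/|\xi|)\hat g(\xi)$, one obtains
$\widehat{\phi_t\ast Rg}(\xi) = M(t\xi)\,\hat g(\xi)$,
where $M(\eta):=-i\,\hat\phi(\eta)\cdot \eta/|\eta|$ is a scalar multiplier (the Euclidean inner product of the vector $\hat\phi(\eta)$ with the unit vector $\eta/|\eta|$). By Plancherel's theorem and Fubini--Tonelli applied to the non-negative integrand,
$\|\tilde Tg\|_2^{\,2} = \int_{\bbbr^n}|\hat g(\xi)|^2 \Big(\int_0^\infty |M(t\xi)|^2\,\tfrac{dt}{t}\Big)\,d\xi$.
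The aim is to show that the inner integral is a constant $C_1^{\,2}$ independent of $\xi$.

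The key point is that $M$ is radial. The vector field $\phi$ is rotation-equivariant: $\phi(Ax)=A\phi(x)$ for every $A\in O(n)$, directly from the formula $\phi(x)=(n\omega_n)^{-1}(x/|x|^n)\chi_{B(0,1)}(x)$. An orthogonal change of variables in the Fourier integral then propagates this to $\hat\phi(A\eta)=A\hat\phi(\eta)$, and since $A$ preserves the inner product, $M(A\eta)=-i\,(A\hat\phi(\eta))\cdot (A\eta)/|A\eta|=M(\eta)$. Hence $M(\eta)=m(|\eta|)$ for some scalar $m$, and the substitution $s=t|\xi|$ inside the inner integral gives $\int_0^\infty |m(s)|^2\,ds/s =: C_1^{\,2}$, independent of $\xi$. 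Thus $\|\tilde Tg\|_2 = C_1\|g\|_2$. The very same argument, with $\psi$ in place of $\phi$, yields $\|\tilde Sg\|_2 = C_2\|g\|_2$, since $\psi(x)=(n\omega_n)^{-1}(x/|x|^n-x)\chi_{B(0,1)}(x)$ is also rotation-equivariant as a vector field.

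The main (and essentially only) substantive step is the radiality of $M$; everything else is mechanical. Finiteness $C_1,C_2<\infty$ is automatic from Lemma~\ref{T13}, which forces $C_i^{\,2}\le C^2$. Positivity is routine: if $C_1=0$ then the Plancherel identity would give $\tilde Tg=0$ for all $g\in L^2$, hence $M\equiv 0$ (by continuity of $\hat\phi$, since $\phi$ has compact support), equivalently $\hat\phi(\eta)\cdot\eta\equiv 0$, i.e.\ $\operatorname{div}\phi\equiv 0$ as a tempered distribution on $\bbbr^n$; but a direct computation shows $\operatorname{div}\phi$ has a nontrivial contribution from the singularity at the origin (and from the jump across $\partial B(0,1)$), so $M\not\equiv 0$. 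The same reasoning rules out $C_2=0$ for $\psi$.
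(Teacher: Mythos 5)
Your proof is correct, and its overall architecture (Plancherel, Tonelli, the scaling substitution $s=t|\xi|$, and finiteness via Lemma~\ref{T13}) is the same as the paper's. The two genuine differences are worth noting. First, where the paper establishes the full structure $\hat\phi(\xi)=i\,(\xi/|\xi|)\,h(|\xi|)$ via Lemma~\ref{T10}, which in turn rests on the linear-algebra Lemma~\ref{T9} about homogeneous maps commuting with $O(n)$, you only prove the weaker fact that the scalar multiplier $M(\eta)=-i\,\hat\phi(\eta)\cdot\eta/|\eta|$ is radial, deduced directly from the rotation-equivariance $\phi(Ax)=A\phi(x)$ and the invariance of the inner product under $A\in O(n)$. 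That is all the $L^2$ computation actually requires, so your route is a legitimate shortcut that bypasses Lemmas~\ref{T9} and~\ref{T10} entirely; the price is that you do not obtain the explicit form of $\hat\phi$, which the paper does not need elsewhere either. Second, you explicitly verify $C_1,C_2>0$, which the paper leaves implicit even though positivity is what makes the constants usable in the duality step: your observation that $C_1=0$ would force $\operatorname{div}\phi=0$ as a tempered distribution, contradicting $\operatorname{div}\phi=\delta_0-\tfrac{1}{n\omega_n}\mathcal{H}^{n-1}\!\restriction_{S^{n-1}}\neq 0$ (and similarly $\operatorname{div}\psi=\delta_0-\omega_n^{-1}\chi_{B(0,1)}\neq 0$ for $\psi$), is consistent with the identification of the measures $\mu_t$ in Step~6 of the paper and closes a small gap in the published argument.
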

Observe that the functions $\phi$ and $\psi$ are of the form $g(|x|)x/|x|$. This allows us to find the structure
of the Fourier transforms of $\phi$ and $\psi$. 
\begin{lemma}
\label{T10}
Let $f\in L^1(\bbbr^n,\bbbr^n)$ be of the form
$$
f(x)=\frac{x}{|x|}g(|x|).
$$
Then there is a continuous function $h:[0,\infty)\to\bbbr$, $h(0)=0$, $h(t)\to 0$ as $t\to\infty$
such that
$$
\hat{f}(\xi)=i\, \frac{\xi}{|\xi|}h(|\xi|),
\quad
\xi\in\bbbr^n.
$$
\end{lemma}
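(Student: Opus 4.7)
The plan is to exploit the two symmetries of $f$: rotational equivariance, $f(Rx)=Rf(x)$ for every $R\in SO(n)$, and oddness, $f(-x)=-f(x)$. These two properties will dictate both the vector structure and the imaginary character of $\hat f(\xi)$.

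First I would pass both symmetries through the Fourier transform. A change of variable $x\mapsto R^{-1}x$ in the integral defining $\hat f$ gives $\hat f(R\xi)=R\hat f(\xi)$ for every rotation $R$. Similarly, writing $\hat f(\xi)=\int e^{-2\pi i x\cdot \xi}f(x)\,dx$ and substituting $x\to -x$, the oddness of $f$ combined with the fact that $f$ is $\mathbb R^n$-valued yields $\hat f(\xi)=-\overline{\hat f(\xi)}$, i.e.\ $\hat f$ is purely imaginary.

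Next I would use the rotational equivariance to determine the vectorial shape of $\hat f$. Fix $\xi\neq 0$ and let $R$ be any rotation fixing $\xi$; then $R\hat f(\xi)=\hat f(R\xi)=\hat f(\xi)$, so $\hat f(\xi)$ lies in the subspace of $\mathbb R^n$ fixed by the stabilizer of $\xi$ in $SO(n)$. That stabilizer acts as $SO(n-1)$ on the hyperplane $\xi^\perp$, so its fixed-point set is precisely $\mathbb R\xi$. Hence $\hat f(\xi)=c(\xi)\,\xi/|\xi|$ for some scalar, and applying the full equivariance shows $c(\xi)$ depends only on $|\xi|$. Combining this with the imaginary character, I can write $c(|\xi|)=i\,h(|\xi|)$ for a real-valued function $h$.

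Finally I would settle the regularity of $h$. Since $f\in L^1$, its Fourier transform is continuous, so the identity $h(|\xi|)=-i\,(\xi/|\xi|)\cdot\hat f(\xi)$ gives continuity of $h$ on $(0,\infty)$. For $h(0)$: the integrand in $\hat f(0)=\int f(x)\,dx$ is odd, so $\hat f(0)=0$, and since $|\hat f(\xi)|=|h(|\xi|)|$ we get $h(t)\to 0$ as $t\to 0^+$, allowing us to set $h(0)=0$ continuously. The decay $h(t)\to 0$ as $t\to\infty$ is exactly Riemann--Lebesgue. The only genuinely non-routine step is the invariant-theoretic argument identifying the fixed vectors of the stabilizer of $\xi$, but once phrased as above it is almost immediate.
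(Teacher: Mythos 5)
Your proposal follows essentially the same route as the paper: both arguments use oddness to see that $\hat f$ is purely imaginary and rotational equivariance to force the vectorial form $\hat f(\xi)=i\,h(|\xi|)\,\xi/|\xi|$; the paper delegates the equivariance step to the classical lemma that a degree-zero homogeneous map commuting with $O(n)$ equals $C\,x/|x|$, whereas you reprove that fact inline via the stabilizer of $\xi$. The one genuine slip is that you work with $SO(n)$ rather than $O(n)$: for $n=2$ the stabilizer of a nonzero $\xi$ in $SO(2)$ is trivial, so its fixed-point set is all of $\bbbr^2$, not $\bbbr\xi$ (indeed $\xi\mapsto J\xi/|\xi|$, with $J$ rotation by $\pi/2$, is $SO(2)$-equivariant and homogeneous of degree $0$ but not a multiple of $\xi/|\xi|$). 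Since $f(x)=\frac{x}{|x|}g(|x|)$ is equivariant under all of $O(n)$, including the reflection fixing $\bbbr\xi$ pointwise, the fix is to run your stabilizer argument in $O(n)$, whose stabilizer of $\xi$ has fixed-point set exactly $\bbbr\xi$ in every dimension. The rest of your argument (purely imaginary values, $\hat f(0)=0$ giving $h(0)=0$, continuity and Riemann--Lebesgue decay) is correct and matches the paper.
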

The proof is based on the following well known result from linear algebra \cite[Lemma~4.1.15]{grafakos-old}, \cite[p.57]{stein3}.
\begin{lemma}
\label{T9}
If $m:\bbbr^n\to\bbbr^n$ is a measurable function that is homogeneous of degree $0$, i.e.
$m(tx)=m(x)$ for $t>0$, and commutes with orthogonal transformations, i.e.
$$
m(\rho(x))=\rho(m(x)),
\quad
x\in\bbbr^n,\ \rho\in O(n),
$$
then there is a constant $C\in\bbbr$ such that
$$
m(x)=C\frac{x}{|x|}
\quad
\mbox{for all $x\neq 0$.}
$$
\end{lemma}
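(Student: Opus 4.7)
The plan is to exploit the fact that homogeneity of degree zero reduces the problem to understanding $m$ on the unit sphere $S^{n-1}$, and then to use the orthogonal-equivariance to pin down $m$ by its value at a single point.

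First I would fix the basis vector $e_1=(1,0,\ldots,0)$ and analyze $m(e_1)$. Let $H\subset O(n)$ be the stabilizer of $e_1$; concretely $H$ consists of the block-diagonal matrices $\mathrm{diag}(1,\rho')$ with $\rho'\in O(n-1)$ acting on the orthogonal complement $e_1^\perp$. For every $\rho\in H$ we have $\rho e_1=e_1$, so the equivariance condition $m(\rho e_1)=\rho m(e_1)$ forces $m(e_1)$ to be fixed by all of $H$. A vector in $\bbbr^n$ that is invariant under every rotation of $e_1^\perp$ must have zero projection onto $e_1^\perp$, so $m(e_1)=Ce_1$ for some scalar $C\in\bbbr$.

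Next, for an arbitrary $x\neq 0$, write $x=|x|\,(x/|x|)$ and choose any $\rho\in O(n)$ with $\rho e_1=x/|x|$ (such a $\rho$ exists since $O(n)$ acts transitively on $S^{n-1}$). Then, using first homogeneity and then equivariance,
\[
m(x)=m\!\left(\frac{x}{|x|}\right)=m(\rho e_1)=\rho\, m(e_1)=\rho(Ce_1)=C\,\frac{x}{|x|},
\]
which is the desired formula. The only mild subtlety is that the choice of $\rho$ is not unique, but the ambiguity lies precisely in the stabilizer $H$ of $e_1$, and we have already arranged $m(e_1)$ to be $H$-invariant, so the conclusion is well defined.

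The main (very modest) obstacle is the linear-algebra fact that the $H$-invariant vectors in $\bbbr^n$ are exactly the scalar multiples of $e_1$; once that is observed, everything else is formal. Measurability plays no essential role in the argument since we are only evaluating $m$ pointwise, but it is consistent with the hypothesis because a measurable function satisfying the displayed relations is automatically equal everywhere (away from the origin) to the continuous function $C\,x/|x|$.
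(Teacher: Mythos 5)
Your proof is correct. The paper does not actually prove Lemma \ref{T9}; it cites it as a well-known fact from \cite{grafakos-old} and \cite{stein3}, and your argument (pin down $m(e_1)$ via the stabilizer of $e_1$ in $O(n)$, then propagate by transitivity of the $O(n)$-action on $S^{n-1}$ together with homogeneity) is precisely the standard proof found in those references, so there is nothing to add.
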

\begin{proof}[Proof of Lemma~\ref{T10}]
Since the function $f$ is odd and takes values in $\bbbr^n$, the real part of $\hat{f}$ equals zero, and hence
$$
i\hat{f}(\xi)=\int_{\bbbr^n} \sin(2\pi x\cdot\xi)f(x)\, dx
$$
takes values in $\bbbr^n$.
Fix $k>0$ and define $m_k:S^{n-1}(0,k)\to\bbbr^n$ by
$m_k(\xi)=i\hat{f}(\xi)$ for $|\xi|=k$. Extend $m_k$ to
$m_k:\bbbr^n\setminus\{ 0\}\to\bbbr^n$ as a function homogeneous of degree $0$, i.e.
$$
m_k(\xi)=i\, \hat{f}\Big(\frac{k\xi}{|\xi|}\Big)
\quad
\mbox{for $\xi\neq 0$.}
$$
We claim that
\begin{equation}
\label{e10}
m_k(\rho(\xi))=\rho(m_k(\xi))
\quad
\mbox{for $\rho\in O(n)$ and  $\xi\neq 0$.}
\end{equation}
Indeed, it suffices to check \eqref{e10} for $|\xi|=k$. We have
\begin{eqnarray*}
m_k(\rho(\xi))
& = & 
\int_{\bbbr^n}\sin(2\pi x\cdot\rho(\xi))\frac{x}{|x|}g(|x|)\, dx
=
\int_{\bbbr^n}\sin(2\pi \rho^{-1}(x)\cdot\xi)\frac{x}{|x|}g(|x|)\, dx\\
& = &
\int_{\bbbr^n}\sin(2\pi x\cdot\xi)\frac{\rho(x)}{|\rho(x)|}g(|\rho(x)|)\, dx 
=
\int_{\bbbr^n}\sin(2\pi x\cdot\xi)\frac{\rho(x)}{|x|}g(|x|)\, dx \\
& = &
\rho\Big(\int_{\bbbr^n}\sin(2\pi x\cdot\xi)\frac{x}{|x|}g(|x|)\, dx\Big)
=
\rho(m_k(\xi)).
\end{eqnarray*}
According to Lemma~\ref{T9} there is a constant $h(k)\in\bbbr$ such that
$m_k(\xi)=-h(k)\xi/|\xi|$. In particular for $|\xi|=k$ we have
$$
i\hat{f}(\xi)=m_k(\xi)=-\frac{\xi}{|\xi|}h(|\xi|).
$$
Clearly $h$ is continuous, $h(0)=0$ and $h(t)\to 0$ as $t\to\infty$,
because $\hat{f}\in C_0(\bbbr^n,\bbbr^n)$.
\end{proof}

\begin{proof}[Proof of Lemma~\ref{T14}]
We will prove the result in the case of the square function $\tilde{T}g$ only.
The proof in the case of $\tilde{S}g$ is the same.
Using the Fubini theorem, the Plancherel theorem and the fact that 
$\hat{\phi}_t(\xi)=\hat{\phi}(t\xi)$ we obtain
$$
\Vert\tilde{T}g\Vert_2^2=
\int_0^\infty\int_{\bbbr^n}|\phi_t*Rg|^2\, dx\, \frac{dt}{t} =
\int_0^\infty\int_{\bbbr^n} |\hat{\phi}(t\xi)\cdot\widehat{Rg}(\xi)|^2\, d\xi\frac{dt}{t} =
\heartsuit,
$$
$$
\hat{\phi}(t\xi)=i\frac{\xi}{|\xi|}h(t|\xi|),
\quad
\widehat{Rg}(\xi) = -i\frac{\xi}{|\xi|}\hat{g}(\xi),
$$
$$
\heartsuit =
\int_0^\infty\int_{\bbbr^n}|\hat{g}(\xi)h(t|\xi|)|^2\, d\xi\frac{dt}{t} =
\int_{\bbbr^n}|g(x)|^2\, dx \int_0^\infty|h(t)|^2\frac{dt}{t} =
C\Vert g\Vert_2^2.
$$
Note that the integral involving $h$ does not depend on $|\xi|$
(use the change of variables $s=t|\xi|$).
Since the square function $\tilde{T}g$ is bounded in $L^2$ we conclude that
$$
\int_0^\infty|h(t)|^2\frac{dt}{t} =C<\infty.
$$
The proof is complete.
\end{proof}

\noindent
{\bf Step 4.} Duality argument.

We will use a standard duality argument \cite[Remark~5.6, p.~507]{GC}, \cite[Exercise~5.1.6]{grafakos},
to show that Lemma~\ref{T13} and Lemma~\ref{T14} imply
\begin{lemma}
\label{T15}
For $1<p<\infty$ there is a constant $C\geq 1$ such that
$$
C^{-1}\Vert g\Vert_p\leq \Vert\tilde{T}g\Vert_p\leq C\Vert g\Vert_p,
\quad
g\in L^p(\bbbr^n),
$$
$$
C^{-1}\Vert g\Vert_p\leq \Vert\tilde{S}g\Vert_p\leq C\Vert g\Vert_p,
\quad
g\in L^p(\bbbr^n).
$$
\end{lemma}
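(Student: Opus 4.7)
The plan is to deduce the two missing bounds $\Vert g\Vert_p\leq C\Vert\tilde{T}g\Vert_p$ and $\Vert g\Vert_p\leq C\Vert\tilde{S}g\Vert_p$ from Lemma~\ref{T14} via a standard polarization and duality argument; the upper bounds are Lemma~\ref{T13}. I will describe the argument for $\tilde T$; the one for $\tilde S$ is verbatim the same, since the radial structure of Lemma~\ref{T10} applies equally to $\psi$.

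First I would polarize. Redoing the Fourier computation from the proof of Lemma~\ref{T14} with two test functions $g,v\in L^2(\bbbr^n)$ rather than one yields the identity
\begin{equation*}
\int_0^\infty\!\!\int_{\bbbr^n}(\phi_t*Rg)(x)\,(\phi_t*Rv)(x)\,dx\,\frac{dt}{t} = c_1\int_{\bbbr^n}g(x)v(x)\,dx,
\end{equation*}
where $c_1=\int_0^\infty|h(t)|^2\,dt/t$ is the positive constant produced in that proof. The verification merely repeats the earlier Plancherel/Fubini chain, using the pointwise scalar-product formula $\hat\phi(t\xi)\cdot\widehat{Rg}(\xi)=h(t|\xi|)\hat g(\xi)$ (and likewise for $v$), and the fact that $\int_0^\infty|h(t|\xi|)|^2\,dt/t=c_1$ is independent of $\xi$.

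Next, fix $g\in L^p\cap L^2$ and any $v\in L^{p'}\cap L^2$ with $\Vert v\Vert_{p'}\leq 1$. Applying Cauchy--Schwarz in the $(x,t)$-measure $dx\,dt/t$ to the left-hand side of the polarization identity, then H\"older in $x$, and finally Lemma~\ref{T13} at the exponent $p'\in(1,\infty)$, I obtain
\begin{equation*}
c_1\left|\int_{\bbbr^n}gv\,dx\right| \leq \int_{\bbbr^n}\tilde{T}g(x)\,\tilde{T}v(x)\,dx \leq \Vert\tilde{T}g\Vert_p\,\Vert\tilde{T}v\Vert_{p'} \leq C\Vert\tilde{T}g\Vert_p.
\end{equation*}
Taking the supremum over such $v$, which forms a dense subset of the unit ball of $L^{p'}$, gives $\Vert g\Vert_p\leq C\Vert\tilde{T}g\Vert_p$ for $g\in L^p\cap L^2$. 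A standard density argument, based on the sublinearity $|\tilde{T}g_1-\tilde{T}g_2|\leq\tilde{T}(g_1-g_2)$ together with the $L^p$-boundedness already in hand, extends the inequality to all of $L^p(\bbbr^n)$.

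The substantive harmonic analysis has already been absorbed into Lemmas~\ref{T13} and~\ref{T14}; the main remaining obstacle is the mild bookkeeping in the polarization step, where one must justify the Plancherel manipulation as a scalar-product-of-vector-valued transforms identity. This is routine: since $\phi_t\in L^1$ and $Rg,Rv\in L^2$, one has $\phi_t*Rg,\phi_t*Rv\in L^2$, and Fubini applies without issue.
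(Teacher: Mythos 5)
Your proposal is correct and follows essentially the same route as the paper: polarize the $L^2$ isometry of Lemma~\ref{T14} (the paper does this abstractly via the polarization identity for the Hilbert space $H=L^2(\bbbr_+,dt/t)$, you do it by rerunning the Plancherel computation with two functions), then bound the polarized pairing by $\Vert\tilde Tg\Vert_p\Vert\tilde Tv\Vert_{p'}$ via Cauchy--Schwarz in $t$ and H\"older in $x$, invoke Lemma~\ref{T13} at the dual exponent, and conclude by duality and density. The only quibble is the phrase ``Cauchy--Schwarz in the $(x,t)$-measure,'' which should read Cauchy--Schwarz in $dt/t$ for each fixed $x$; your displayed inequality is the correct one.
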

\begin{proof}
We will prove the result in the $\tilde{T}g$ case, the proof in the $\tilde{S}g$ case is the same.
Consider the following operator acting on functions $g$ defined on $\bbbr^n$ whose values at $x\in\bbbr^n$ are measurable functions
of variable $t\in\bbbr_+$
$$
Kg(x)=\big(\phi_t*Rg(x)\big)_{t>0}.
$$
Lemma~\ref{T13} states that $K$ is a bounded operator between the spaces
\begin{equation}
\label{e17}
K:L^p(\bbbr^n)\to L^p(\bbbr^n,L^2(\bbbr_+,dt/t))=L^p(\bbbr^n,H),
\end{equation}
and Lemma~\ref{T14} means that
$K:L^2(\bbbr^n)\to L^2(\bbbr^n,H)$ is an isometry multiplied by a constant factor
\begin{equation}
\label{e18}
\Big(\int_{\bbbr^n}\Vert Kg\Vert_H^2\, dx\Big)^{1/2} =
C_1\Big(\int_{\bbbr^n}|g|^2\, dx\Big)^{1/2},
\quad
g\in L^2(\bbbr^n).
\end{equation}
Here we consider real valued functions $g$. Let $q$ be the H\"older conjugate exponent to $p$, $p^{-1}+q^{-1}=1$.
Since the scalar product is determined by the Hilbert norm (polarization identity)
we conclude from \eqref{e18} and from \eqref{e15} with $p$ replaced by $q$ that for
$g\in L^p\cap L^2$ and $h\in L^q\cap L^2$ we have
\begin{eqnarray*}
C_1^2\int_{\bbbr^n} gh\, dx
& = &
\int_{\bbbr^n}\langle Kg,Kh\rangle_H\, dx
\leq
\Big(\int_{\bbbr^n}\Vert Kg\Vert_H^p\, dx\Big)^{1/p}
\Big(\int_{\bbbr^n}\Vert Kh\Vert_H^q\, dx\Big)^{1/q} \\
& = &
\Vert\tilde{T}g\Vert_p\Vert\tilde{T}h\Vert_q
\leq
C\Vert\tilde{T}g\Vert_p\Vert h\Vert_q.
\end{eqnarray*}
Taking supremum over $h\in L^q\cap L^2$, $\Vert h\Vert_q\leq 1$ we obtain
$\Vert g\Vert_p\leq C\Vert\tilde{T}g\Vert_p$.
We proved this inequality for $g\in L^p\cap L^2$, but a density argument shows that it is true for any $g\in L^p(\bbbr^n)$.
\end{proof}

\noindent
{\bf Step 5.} Fractional Laplacian $(-\Delta)^{1/2}.$

In this section we will prove the left inequalities at \eqref{e19} and \eqref{e20} for $f\in W^{1,p}$
by applying Lemma~\ref{T15} to $g=(-\Delta)^{1/2}f$. 

Recall that the fractional Laplace operator is defined by
$$
(-\Delta)^{1/2}\vi = (2\pi|\xi|\hat{\vi}(\xi))^\vee,
\qquad
\vi\in \mathscr{S}(\bbbr^n).
$$
\begin{lemma}
\label{T16}
For $1<p<\infty$ there is a constant $C\geq 1$ such that
$$
C^{-1}\Vert\nabla\vi\Vert_p\leq\Vert(-\Delta)^{1/2}\vi\Vert_p\leq C\Vert\nabla\vi\Vert_p,
\qquad
\vi\in\mathscr{S}(\bbbr^n).
$$
\end{lemma}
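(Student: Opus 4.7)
The plan is to relate $\nabla\vi$ and $(-\Delta)^{1/2}\vi$ through the vector valued Riesz transform $R = (R_1,\ldots,R_n)$ already introduced in Step~2, and then invoke the $L^p$ boundedness of the Riesz transforms for $1 < p < \infty$.

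First I would compute the Fourier symbols. Since $\widehat{\partial_j \vi}(\xi) = 2\pi i \xi_j \hat{\vi}(\xi)$ and $\widehat{(-\Delta)^{1/2}\vi}(\xi) = 2\pi |\xi| \hat{\vi}(\xi)$, a direct computation of symbols gives the two identities
\[
\partial_j \vi = R_j\bigl((-\Delta)^{1/2}\vi\bigr), \qquad (-\Delta)^{1/2}\vi = -\sum_{j=1}^n R_j(\partial_j \vi),
\]
valid for $\vi \in \mathscr{S}(\bbbr^n)$. The first identity is verified by checking that the Fourier multiplier on the right is $(-i\xi_j/|\xi|)(2\pi|\xi|) = -2\pi i \xi_j$, and the second by checking $-\sum_j(-i\xi_j/|\xi|)(2\pi i \xi_j) = 2\pi|\xi|$.

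Next I would apply the classical $L^p$ boundedness of the Riesz transforms, which states that $\Vert R_j g \Vert_p \leq C_p \Vert g \Vert_p$ for $1 < p < \infty$. Applying this to the first identity gives $\Vert \partial_j \vi \Vert_p \leq C\Vert (-\Delta)^{1/2}\vi \Vert_p$ for each $j$, and summing yields the right inequality $\Vert \nabla \vi \Vert_p \leq C \Vert (-\Delta)^{1/2}\vi \Vert_p$. Applying it to the second identity gives $\Vert (-\Delta)^{1/2}\vi \Vert_p \leq \sum_j C\Vert \partial_j \vi \Vert_p \leq C\Vert \nabla \vi\Vert_p$, which is the left inequality.

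There is no real obstacle here beyond the $L^p$ boundedness of the Riesz transforms, which is a standard Calder\'on--Zygmund result and may be quoted without further comment. One minor technical point to mention is that the operator identities above need to be understood in the sense of tempered distributions (or equivalently via Fourier multipliers on $\mathscr{S}$), which is unambiguous since $\vi \in \mathscr{S}(\bbbr^n)$ ensures all quantities belong to $L^p$ and the multipliers act smoothly away from the origin.
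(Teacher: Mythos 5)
Your proof is correct and takes essentially the same route as the paper: both identities are read off from the Fourier multipliers of $R$, $\nabla$, and $(-\Delta)^{1/2}$, and the two-sided bound then follows from the $L^p$ boundedness of the Riesz transforms. (A minor slip: with the paper's convention $(R\vi)^\wedge(\xi)=-i(\xi/|\xi|)\hat{\vi}(\xi)$ your own multiplier computations give $-2\pi i\xi_j$ and $-2\pi|\xi|$, so the correct identities are $R_j\bigl((-\Delta)^{1/2}\vi\bigr)=-\partial_j\vi$ and $(-\Delta)^{1/2}\vi=+\sum_j R_j(\partial_j\vi)$; the signs are immaterial here since only norms are taken.)
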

\begin{proof}
By taking the Fourier transform and looking at the multipliers we see that
$$
R(-\Delta)^{1/2}\vi = -\nabla\vi
\quad
\mbox{and}
\quad
R\cdot\nabla\vi = (-\Delta)^{1/2}\vi,
$$
where $R\cdot\nabla\vi = \sum_j R_j\partial_j\vi$. Then the result follows from boundedness of
$R$ in $L^p$.
\end{proof}
By continuity $(-\Delta)^{1/2}$ uniquely extends to a bounded operator
$$
(-\Delta)^{1/2}:W^{1,p}(\bbbr^n)\to L^p(\bbbr^n), 
\quad
1<p<\infty
$$
that also satisfies the inequality of Lemma~\ref{T16} and
$$
R(-\Delta)^{1/2}f=-\nabla f
\quad
\mbox{for $f\in W^{1,p}(\bbbr^n)$, $1<p<\infty$.}
$$
Now for $f\in W^{1,p}(\bbbr^n)$ Lemma~\ref{T15} yields
$$
\Vert Tf\Vert_p=\Vert\tilde{T}\big((-\Delta)^{1/2}f\big)\Vert_p\approx
\Vert(-\Delta)^{1/2}f\Vert_p\approx\Vert\nabla f\Vert_p
$$
and similarly $\Vert Sf\Vert_p\approx\Vert\nabla f\Vert_p$.

\noindent
{\bf Step 6.} The final step of the proof.

We proved that $f\in W^{1,p}$, $1<p<\infty$ satisfies the inequalities \eqref{e19} and \eqref{e20}. It remains to prove that
if $f\in L^p$ satisfies $Tf\in L^p(\bbbr^n)$ or $Sf\in L^p(\bbbr^n)$, then $f\in W^{1,p}$.

Let $f\in L^p(\bbbr^n)$, $1<p<\infty$ and assume that $Tf\in L^p$ or $Sf\in L^p$.
Observe that the functions $Tf$ and $Sf$ are of the form
$$
Kf(x)=\Big(\int_0^\infty |f*\mu_t(x)|^2\frac{dt}{t^3}\Big)^{1/2},
$$
where $\mu_t$ is some measure. For example in the case of the square function $Tf$, 
$\mu_t$ is the Dirac delta minus the Hausdorff measure $\H^{n-1}$ restricted to $S^{n-1}(0,t)$,
normalized to have total mass $1$.

Let $\vi$ be a standard mollifier, i.e. $\vi\in C_0^\infty(\bbbr^n)$, $\vi\geq 0$,
$\int_{\bbbr^n}\vi(x)\, dx =1$. Let $\vi_\eps(x)=\eps^{-n}\vi(x/\eps)$ and 
$f_\eps=f*\vi_\eps$. Clearly $f_\eps\to f$ in $L^p$. Since 
$\nabla f_\eps=f*\nabla\vi_\eps$, $\Vert\nabla f_\eps\Vert_p\leq\Vert f\Vert_p\, \Vert\nabla\vi_\eps\Vert_1<\infty$.
Hence $f_\eps\in W^{1,p}$ and thus
$$
\Vert Kf_\eps\Vert_p\approx\Vert\nabla f_\eps\Vert_p.
$$
On the other hand for almost all $x\in\bbbr^n$ we have
$$
Kf_\eps(x)
=
\Big(\int_0^\infty\Big|\int_{\bbbr^n}(f*\mu_t)(x-y)\vi_\eps(y)\, dy\Big|^2\frac{dt}{t^3}\Big)^{1/2}
\leq
\Big(\int_0^\infty\Big(\int_{\bbbr^n}|F(y,t)|\vi_\eps(y)\, dy\Big)^{2}\frac{dt}{t^3}\Big)^{1/2},
$$
where $F(y,t)=(f*\mu_t)(x-y)$.
Thus the Minkowski integral inequality, \cite{stein3}, implies
$$
Kf_\eps(x)
\leq
\int_{\bbbr^n}\Big(\int_0^\infty|F(y,t)|^2\frac{dt}{t^3}\Big)^{1/2}\vi_\eps(y)\, dy
=
Kf*\vi_\eps(x).
$$
Hence
$$
\Vert\nabla f_\eps\Vert_p\approx\Vert Kf_\eps\Vert_p\leq \Vert Kf\Vert_p\Vert\vi_\eps\Vert_1 = \Vert Kf\Vert_p<\infty.
$$
Since the functions $\nabla f_\eps$ are bounded in $L^p$, there is a sequence $\eps_k\downarrow 0$ such that
the sequence $\nabla f_{\eps_k}$ converges weakly in $L^p$. Since $f_{\eps_k}\to f$ in $L^p$ we conclude that
$f\in W^{1,p}$. The proof is complete.

\section{Weighted Sobolev spaces}
\label{3a}

In this section we will show that the claims of Theorems~\ref{T3} and~\ref{T3.5} remain valid in the weighted Sobolev space
$W^{1,p}_w(\bbbr^n)$, $1<p<\infty$, where $w\in A_p$ as a Muckenhoupt weight; see \cite{GC,grafakos2} for the theory of Muckenhoupt 
weights and \cite[Chapter 1]{HKM} for the theory of weighted Sobolev spaces.
We will write $L^p_w$ or $L^p(w)$ to denote the weighted $L^p$ space.

\begin{theorem}
\label{main2}
Let $w\in A_p$, $1<p<\infty$, and let $f\in L^p_w(\bbbr^n)$.
Then the following conditions are equivalent
\begin{enumerate}
\item $f\in W^{1,p}_w(\bbbr^n)$,
\item $Tf\in L^p_w(\bbbr^n)$,
\item $Sf\in L^p_w(\bbbr^n)$.
\end{enumerate}
Moreover for $f\in W^{1,p}_w(\bbbr^n)$ we have
$\Vert Tf\Vert_{L^p_w}\approx \Vert Sf\Vert_{L^p_w} \approx \Vert\nabla f\Vert_{L^p_w}$.
\end{theorem}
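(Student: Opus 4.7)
The plan is to repeat the scheme of Sections~\ref{2} and~\ref{3} with two substitutions: the unweighted Theorems~\ref{T1.1} and~\ref{T6} are replaced by their common weighted extension, Sato's Theorem~\ref{T11}, and everywhere we use that for $w\in A_p$ the vector Riesz transform $R$ is bounded on $L^p_w(\bbbr^n)$. Lemmas~\ref{T4} and~\ref{T3.51} are pointwise identities, so they remain available without change.

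\textbf{Upper bounds.} For $f\in W^{1,p}_w$, Lemma~\ref{T4} writes $Tf$ as a Marcinkiewicz integral $\mu_\Omega(\nabla f)$ with odd smooth kernel $\Omega(y')=y'$, so Theorem~\ref{T11} yields $\|Tf\|_{L^p_w}\leq C\|\nabla f\|_{L^p_w}$. Using the decomposition $\psi=\phi-\eta$ of \eqref{e23}, Lemma~\ref{T3.51} gives $Sf\leq Tf+Wf$ with $Wf=(\int_0^\infty|\eta_t*\nabla f|^2\,dt/t)^{1/2}$; since $\eta$ verifies \eqref{e4}--\eqref{e6}, the same theorem also controls $Wf$ in $L^p_w$, and hence $\|Sf\|_{L^p_w}\leq C\|\nabla f\|_{L^p_w}$.

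\textbf{Lower bounds via weighted duality.} Introduce the auxiliary square functions
\[
\tilde Tg=\Big(\int_0^\infty|\phi_t*Rg|^2\,\frac{dt}{t}\Big)^{1/2},\qquad
\tilde Sg=\Big(\int_0^\infty|\psi_t*Rg|^2\,\frac{dt}{t}\Big)^{1/2},
\]
exactly as in Step~2 of Section~\ref{3}. The previous paragraph together with the $L^p_w$-boundedness of $R$ gives $\|\tilde Tg\|_{L^p_w}+\|\tilde Sg\|_{L^p_w}\leq C\|g\|_{L^p_w}$. The $L^2$-isometry of Step~3 is purely spectral (only Plancherel and the homogeneity of the Fourier transform are used), so $\|\tilde Tg\|_2=C_1\|g\|_2$ and $\|\tilde Sg\|_2=C_2\|g\|_2$ survive intact. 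The key new input is the self-duality of the Muckenhoupt class: $w\in A_p$ implies $w':=w^{1-q}\in A_q$ where $1/p+1/q=1$, and the same boundedness argument applies to $\tilde T$ and $\tilde S$ on $L^q_{w'}$. Writing $Kg=(\phi_t*Rg)_{t>0}$ as a vector valued map into $H=L^2(\bbbr_+,dt/t)$, polarization on $L^2(\bbbr^n,H)$ yields
\[
C_1^2\int_{\bbbr^n}gh\,dx=\int_{\bbbr^n}\langle Kg,Kh\rangle_H\,dx
\qquad\text{for } g\in L^p_w\cap L^2,\ h\in L^q_{w'}\cap L^2,
\]
and Cauchy--Schwarz in $H$ followed by the weighted H\"older inequality (factoring $1=w^{1/p}\cdot w^{-1/p}$ inside the spatial integral) gives
\[
\Big|\int_{\bbbr^n}gh\,dx\Big|\leq C\|\tilde Tg\|_{L^p_w}\|\tilde Th\|_{L^q_{w'}}\leq C'\|\tilde Tg\|_{L^p_w}\|h\|_{L^q_{w'}}.
\]
Taking the supremum over $h$ in the unit ball of $L^q_{w'}=(L^p_w)^*$ produces $\|g\|_{L^p_w}\leq C\|\tilde Tg\|_{L^p_w}$, and the same argument works verbatim for $\tilde S$. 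I expect this weighted duality step to be the main obstacle, and its success rests entirely on the self-duality $w\mapsto w^{1-q}$ of the $A_p$ class, precisely the observation credited to Yibiao Pan in the acknowledgments.

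\textbf{Conclusion and approximation.} The Fourier-multiplier identities $R(-\Delta)^{1/2}=-\nabla$ and $R\cdot\nabla=(-\Delta)^{1/2}$ combined with $L^p_w$-boundedness of $R$ upgrade to $\|(-\Delta)^{1/2}f\|_{L^p_w}\approx\|\nabla f\|_{L^p_w}$ for $f\in W^{1,p}_w$. Substituting $g=(-\Delta)^{1/2}f$ in the weighted version of Lemma~\ref{T15} therefore produces $\|Tf\|_{L^p_w}\approx\|Sf\|_{L^p_w}\approx\|\nabla f\|_{L^p_w}$. Finally, the implication that $f\in L^p_w$ with $Tf\in L^p_w$ (or $Sf\in L^p_w$) already forces $f\in W^{1,p}_w$ follows by the mollification-and-weak-compactness argument of Step~6 in Section~\ref{3}: set $f_\varepsilon=f*\varphi_\varepsilon$, use Minkowski's integral inequality to dominate $Kf_\varepsilon(x)\leq(Kf)*\varphi_\varepsilon(x)$ pointwise for $K\in\{T,S\}$, bound $\|(Kf)*\varphi_\varepsilon\|_{L^p_w}\leq C\|Kf\|_{L^p_w}$ via the weighted Hardy--Littlewood maximal inequality (valid because $w\in A_p$), and extract a weak $L^p_w$-limit of $\{\nabla f_{\varepsilon_k}\}$ along a suitable subsequence. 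This completes all three equivalences and the quantitative bound.
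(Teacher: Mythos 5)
Your proposal is correct and follows essentially the same route as the paper: Sato's Theorem~\ref{T11} for the weighted upper bounds, the unweighted $L^2$ isometry plus the $A_p$ self-duality $w\mapsto w^{-q/p}\in A_q$ for the lower bounds, and the mollification/weak-compactness argument for the final implication. One small slip: to control $Wf$ you should check that $\eta$ has the form $|x|^{-n+\eps}\Omega(x')\chi_{B(0,1)}$ required by Theorem~\ref{T11} (it does, with $\eps=n+1$), rather than the conditions \eqref{e4}--\eqref{e6}, which are the hypotheses of the unweighted Theorem~\ref{T6}.
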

\begin{proof}
First of all observe that for $w\in A_p$, $W^{1,p}_w(\bbbr^n)\subset W^{1,1}_{\rm loc}(\bbbr^n)$ (see 
\cite[p.14]{HKM}) and hence Lemmas~\ref{T4} and~\ref{T3.51} remain valid in the weighted Sobolev space $W^{1,p}_w(\bbbr^n)$.

The proof is based on the following result of Sato \cite[Corollary~1]{sato}
which is a generalization of Theorem~\ref{T1.1}.
\begin{theorem}
\label{T11}
For $\eps>0$ let
$$
\zeta(x)=|x|^{-n+\eps}\Omega(x')\chi_{B(0,1)}(x),
$$
where $x'=x/|x|$, $\Omega\in L^\infty(S^{n-1})$, and $\int_{S^{n-1}}\Omega(x)\, d\sigma(x)=0$. Then the square function
$$
\sigma(f)(x)=
\Big(\int_0^\infty |\zeta_t*f(x)|^2\frac{dt}{t}\Big)^{1/2}
$$
is bounded in the weighted space $L^p_w(\bbbr^n)$, $w\in A_p$, $1<p<\infty$,
$$
\Vert\sigma(f)\Vert_{L^p_w}\leq C_{p,w}\Vert f\Vert_{L^p_w}.
$$
\end{theorem}
As an immediate application we obtain
that if $g\in L^p_w(\bbbr^n)$, then
\begin{equation}
\label{efef}
\Vert\tilde{T}g\Vert_{L^p_w}\leq C\Vert g\Vert_{L^p_w},
\quad
\Vert\tilde{S}g\Vert_{L^p_w}\leq C\Vert g\Vert_{L^p_w}.
\end{equation}
Indeed, the functions
$\phi$ and $\eta$ (defined in \eqref{e23}) satisfy the assumptions of Theorem~\ref{T11}.
Since $\psi=\phi-\eta$ (see \eqref{e23}) boundedness of the Riesz transform in $L^p_w$ (see \cite[Chapter~IV.3]{GC},\cite{grafakos2}) yields \eqref{efef}.
It is also easy to see that the duality argument works in the weighted case too. Indeed, let $q$ be the H\"older conjugate
exponent to $p$. It directly follows from the definition of the Muckenhoupt weight that $w^{-q/p}\in A_q$.
Hence the unweighted isometry, Lemma~\ref{T14} yields
\begin{eqnarray*}
C_1^2 \int_{\bbbr^n} gh\, dx
& \leq &
\int_{\bbbr^n} \Vert Kg\Vert_H\Vert Kh\Vert_H\, dx \\
& \leq &
\Big( \int_{\bbbr^n}\Vert Kg\Vert_H^p w\, dx\Big)^{1/p}
\Big( \int_{\bbbr^n}\Vert Kh\Vert_H^q w^{-q/p}\, dx\Big)^{1/p}\\
& = &
\Vert\tilde{T}g\Vert_{L^p_w}\Vert\tilde{T}h\Vert_{L^q(w^{-q/p})} 
 \leq 
C\Vert\tilde{T}g\Vert_{L^p_w}\Vert h\Vert_{L^q(w^{-q/p})}.
\end{eqnarray*} 
Taking the supremum over $\Vert h\Vert_{L^q(w^{-q/p})}\leq 1$ we get
\begin{eqnarray*}
\Vert g\Vert_{L^p_w}
& = &
\Vert gw^{1/p}\Vert_p =
\sup_{\Vert hw^{-1/p}\Vert_q\leq 1} \int_{\bbbr^n} gw^{1/p}hw^{-1/p}\, dx \\
& = &
\sup_{\Vert h\Vert_{L^q(w^{-q/p})}\leq 1} \int_{\bbbr^n} gh \leq C\Vert\tilde{T}g\Vert_{L^p_w}.
\end{eqnarray*}
A similar argument applies also to $\tilde{S}g$. This combined with \eqref{efef} yields that
$$
\Vert\tilde{T}g\Vert_{L^p_w}\approx\Vert \tilde{S}g\Vert_{L^p_w}\approx\Vert g\Vert_{L^p_w}.
$$
Since the Riesz transform is bounded in $L^p_w$, Lemma~\ref{T16} is also true if we replace $L^p$ by 
$L^p_w$. Hence as in the unweighted case for $f\in W^{1,p}_w$ we have
$$
\Vert Tf\Vert_{L^p_w}\approx \Vert Sf\Vert_{L^p_w}\approx \Vert\nabla f\Vert_{L^p_w}.
$$
Finally the argument used to show that if $f\in L^p$ and $Tf\in L^p$ or $Sf\in L^p$, then
$f\in W^{1,p}$ also extends to the weighted case, but we need to use boundedness of the Hardy-Littlewood maximal function
$\M$ in $L^p_w$ (see \cite{GC},\cite[Theorem~9.1.9]{grafakos2}).

Suppose that $f\in L^p_w$ and $Tf\in L^p_w$ or $Sf\in L^p_w$. We need to show that $f\in W^{1,p}_w$. Since
$L^p_w\subset L^1_{\rm loc}$, $f_\eps=f*\vi_\eps\to f$ a.e. It is easy to see that
$|f_\eps|\leq C\M f\in L^p_w$, and hence the dominated convergence theorem yields that $f_\eps\to f$ in $L^p_w$.
Moreover
$$
|\nabla f_\eps|=|f*\nabla\vi_\eps|\leq C\eps^{-1}\M f\in L^p_w,
$$
so $f_\eps\in W^{1,p}_w$. Accordingly $\Vert Kf_\eps\Vert_{L^p_w}\approx \Vert\nabla f\Vert_{L^p_w}$. From
the proof in the unweighted case we have
$$
Kf_\eps\leq Kf*\vi_\eps\leq C\M(Kf)\in L^p_w,
$$
$$
\Vert\nabla f_\eps\Vert_{L^p_w}\approx \Vert Kf_\eps\Vert_{L^p_w}\leq C\Vert \M(Kf)\Vert_{L^p_w}<\infty.
$$
Thus $f_\eps$ is a bounded family in the reflexive Sobolev space $W^{1,p}_w$ (see \cite[p.13]{HKM}
for the proof of reflexivity). Since $f_\eps\to f$ in $L^p_w$, a weak compactness argument implies that
$f\in W^{1,p}_w(\bbbr^n)$. The proof is complete.
\end{proof}

\section{Proof of Proposition~\ref{T3.6}}
\label{4}

Integration by parts gives
\begin{eqnarray*}
\omega_n^2 \int_\eps^T |f(x)-f_{B(x,t)}|^2\, \frac{dt}{t^3} 
& = &
\int_\eps^T \Big(\int_{B(x,t)}(f(x)-f(y))\, dy\Big)^2\frac{dt}{t^{2n+3}} \\
& = &
\left.\frac{t^{-2n-2}}{-2n-2} \Big(\int_{B(x,t)}(f(x)-f(y))\, dy\Big)^2\right|_\eps^T \\
& + & 
\frac{1}{2n+2}\int_\eps^T t^{-2n-2}\frac{d}{dt}\Big(\int_{B(x,t)}(f(x)-f(y))\, dy\Big)^2\, dt \\
& = &
A(t)\big|_\eps^T +B_{\eps,T}.
\end{eqnarray*}
We have
\begin{eqnarray*}
B_{\eps,T}
& = &
\frac{1}{2n+2} \int_\eps^T t^{-2n-2}\cdot 2
\Big(\int_{B(x,t)}(f(x)-f(y))\, dy\Big)
\Big(\int_{S(x,t)}(f(x)-f(y))\, d\sigma(y)\Big)\, dt \\
& = &
\frac{2n\omega_n^2}{2n+2} \int_\eps^T
(f(x)-f_{B(x,t)})(f(x)-f_{S(x,t)})\, \frac{dt}{t^3} \\
& \leq &
\frac{n\omega_n^2}{2n+2}
\Big[
\int_\eps^T |f(x)-f_{B(x,t)}|^2\frac{dt}{t^3} +
\int_\eps^T |f(x)-f_{S(x,t)}|^2\frac{dt}{t^3}
\Big]\, .
\end{eqnarray*}
In the last step we applied the inequality $ab\leq (a^2+b^2)/2$.
Thus
$$
\omega_n^2\Big(1-\frac{n}{2n+2}\Big) \int_\eps^T |f(x)-f_{B(x,t)}|^2\frac{dt}{t^3} 
\leq
A(t)\big|_\eps^T + 
\frac{n\omega_n^2}{2n+2} 
\int_\eps^T |f(x)-f_{S(x,t)}|^2\frac{dt}{t^3}
$$
and it suffices to prove that for almost all $x$,
$$
A(t)\big|_\eps^T = A(T)-A(\eps)\to 0
\quad
\mbox{as $T\to\infty$ and $\eps\to 0$.}
$$
We have
$$
A(T)=C(n)\Big(\frac{1}{T}\big(f(x)-\barint_{B(x,T)}f(y)\, dy\big)\Big)^2\to 0
\quad
\mbox{as $T\to\infty$ for almost all $x\in\bbbr^n$.}
$$
Since
$$
\int_{B(x,\eps)}\nabla f(x)\cdot (y-x)\, dy = 0
$$
we have
$$
A(\eps) = C(n)\Big(\frac{1}{\eps}\,\barint_{B(x,\eps)} \big(f(x)+\nabla f(x)\cdot(y-x)-f(y)\big)\, dy\Big)^2 \to 0
\quad
\mbox{as $\eps\to 0$}
$$
for almost all $x\in\bbbr^n$ by
\cite[Section~6.1.2, Theorem~2]{EG}.
\hfill $\Box$

\section{Final remarks and comments}
\label{5}

\subsection{Sobolev spaces on metric spaces}
As was pointed out by Alabern, Mateu and Verdera \cite{AMV}, the characterization of $W^{1,p}$ given in Theorem~\ref{T3.5}
can be used to define a Sobolev space on any metric-measure space. It is an interesting question to see how this definition
is related to other definitions existing in the literature, \cite{cheeger,hajlasz2,hajlasz,SmP,shanmugalingam}. 
Formally one could try to use the characterization given in Theorem~\ref{T3}
to define a Sobolev space on a metric-measure space, but the main difficulty would be that, in general, there is no reasonable way to define 
measure on the boundary of a ball which would be needed for the spherical averages. It may even happen that the boundary of a ball is empty.

\subsection{The spherical maximal function}
Kinnunen \cite{kinnunen} proved that the Hardy-Littlewood maximal function
is bounded in the Sobolev space, $\M:W^{1,p}(\bbbr^n)\to W^{1,p}(\bbbr^n)$, $1<p<\infty$.
Actually, any sub-linear operator that commutes with translations and is bounded in $L^p$, $1<p<\infty$
is also bounded in $W^{1,p}$, see e.g. \cite[Theorem~1]{hajlaszo}. From this result it follows that the spherical maximal operator
$$
\mathscr{S} f(x)=\sup_{t>0} \Big|\barint_{S(x,t)} f(y)\, d\sigma(y)\Big|
$$
is bounded in $W^{1,p}$ for $p>n/(n-1)$. Indeed, according to a celebrated result of Stein \cite{stein4}, and Bourgain \cite{bourgain},
$\mathscr{S}:L^p(\bbbr^n)\to L^p(\bbbr^n)$ is bounded for $p>n/(n-1)$. It was conjectured in \cite{hajlaszo} that in the range $1<p\leq n/(n-1)$
the spherical maximal operator is a bounded operator from $W^{1,p}$ to the homogeneous Sobolev space $\dot{W}^{1,p}$;
see \cite{hajlaszl1,hajlaszl2} for results supporting this conjecture.

The next result which is a direct consequence of Lemma~\ref{T5} provides another support for this conjecture as it
allows to represent $\mathscr{S} f$ as a Hardy-Littlewood type maximal function.
\begin{theorem}
For $f\in W^{1,1}_{\rm loc}(\bbbr^n)$ we have
$$
\mathscr{S} f(x) = \sup_{t>0} \Big|\barint_{B(x,t)}\Big(f(y)-\frac{1}{n}\nabla f(y)\cdot(x-y)\Big)\, dy\Big|.
$$
\end{theorem}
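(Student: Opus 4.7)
The statement is a direct corollary of Lemma~\ref{T5}, so the main task is to extract the formula $f_{S(x,t)}=\barint_{B(x,t)}\bigl(f(y)-\tfrac{1}{n}\nabla f(y)\cdot(x-y)\bigr)\,dy$ from the two identities given there and then take the supremum over $t>0$.

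The plan is as follows. First I would recall the two identities of Lemma~\ref{T5}: both $f(x)-f_{S(x,t)}$ and $f(x)-f_{B(x,t)}$ are expressed via $\frac{1}{n\omega_n}\int_{B(x,t)}\nabla f(y)\cdot\frac{x-y}{|x-y|^n}\,dy$, the only difference being the extra term $\frac{1}{n}\barint_{B(x,t)}\nabla f(y)\cdot(x-y)\,dy$ appearing in the second. Subtracting the two identities (so as to cancel both $f(x)$ and the common singular kernel piece) gives
\[
f_{S(x,t)}=f_{B(x,t)}-\frac{1}{n}\barint_{B(x,t)}\nabla f(y)\cdot(x-y)\,dy,
\]
which, since $f_{B(x,t)}=\barint_{B(x,t)} f(y)\,dy$, is the same as
\[
f_{S(x,t)}=\barint_{B(x,t)}\Bigl(f(y)-\frac{1}{n}\nabla f(y)\cdot(x-y)\Bigr)\,dy.
\]

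Then I would take absolute values on both sides and the supremum over $t>0$. By definition of $\mathscr{S} f$, this immediately yields the desired identity for every $x$ at which both identities of Lemma~\ref{T5} hold (almost every $x\in\bbbr^n$, which is the set on which the spherical averages $f_{S(x,t)}$ are defined in the first place).

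There is no real obstacle: all the work was already done in Lemma~\ref{T5}, whose proof handles both the $W^{1,1}_{\mathrm{loc}}$ regularity issue (via the trace theorem allowing restriction of $f$ to $S(x,t)$) and the exchange of differentiation and integration in $\tau$. If anything, a small point worth a brief remark is to note that the subtraction is legitimate pointwise a.e.\ in $x$ for every $t>0$ simultaneously, which follows from the fact that both sides of each identity in Lemma~\ref{T5} are, for fixed $x$ in a full-measure set, continuous functions of $t$.
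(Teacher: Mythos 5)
Your proof is correct and follows exactly the route the paper intends: the paper states this theorem as ``a direct consequence of Lemma~\ref{T5}'' and the intended argument is precisely your subtraction of the two identities to get $f_{S(x,t)}=\barint_{B(x,t)}\bigl(f(y)-\tfrac{1}{n}\nabla f(y)\cdot(x-y)\bigr)\,dy$, followed by taking absolute values and the supremum over $t>0$. Your additional remark on the quantifier ``for all $t>0$ and almost all $x$'' is a sensible (if minor) point of care that the paper leaves implicit.
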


\end{document}